\newtheorem{theorem}{Theorem}[section]
\newtheorem{proposition}[theorem]{Proposition}%
\newtheorem{remark}[theorem]{Remark}%
\newtheorem{definition}[theorem]{Definition}%
\providecommand{\keywords}[1]
{	
 {\small \textbf{\textit{Keywords---}} #1}
}
\title{Invariant manifolds in a reversible Hamiltonian system: the tentacle-like geometry}
\author[1]{P. S. Casas\thanks{pabloscasas@uniovi.es}}
\author[1]{F. Drubi\thanks{drubifatima@uniovi.es}}
\author[1]{S. Ib\'a\~{n}ez\thanks{Corresponding author: mesa@uniovi.es}}
\affil[1]{Department of Mathematics, University of Oviedo, Oviedo, Spain}
\date{\today}
\begin{document}
\maketitle
\begin{abstract}
We study a one-parameter family of time-reversible Hamiltonian vector fields in $\mathbb{R}^4$, which has received great attention in the literature. On the one hand, it is due to the role it plays in the context of certain applications in the field of Physics or Engineering and, on the other hand, we especially highlight its relevance within the framework of generic unfoldings of the four-dimensional nilpotent singularity of codimension four.

The system exhibits a bifocal equilibrium point for a range of parameter values. The associated two-dimensional invariant manifolds, stable and unstable, fold into the phase space in such a way that they produce intricate patterns. This entangled geometry has previously been called \textit{tentacular geometry}.
We consider a three-dimensional level set containing the bifocal equilibrium point to gain insight into the folding behavior of these invariant manifolds. Our method consists of describing the traces left by invariant manifolds when crossing an invariant cross section by the reversibility map. With this new approach, we provide a better understanding of how tentacular geometry evolves with respect to the parameter. 

Our techniques enables us to link the tentacular geometry on the cross section with the study of cocooning cascades of homoclinic tangencies. Indeed, we present a general theory to extend to $\mathbb{R}^4$ the phenomena related to cocoon bifurcation that classically develop within families of three-dimensional reversible vector fields. On the basis of these results, we conjecture the existence of heteroclinic cycles consisting of two orbits connecting the bifocus with a saddle node periodic orbit. 
\end{abstract}
\keywords{Reversibility, bifocal homoclinic orbits, tentacular geometry, cocooning cascades of homoclinic tangencies}

\section{Introduction}\label{sec1}
The one-parameter family of four-dimensional vector fields
\begin{equation}
  \left\{\begin{array}{l}
    x_1' = x_2,\\
    x_2' = x_3,\\
    x_3' = x_4,\\
    x_4' = - x_1 + \eta_3 x_3 + x_1^2,
  \end{array}\right.
  \label{sistema_principal}
\end{equation}
with $\eta_3 \in \mathbb{R}$, has been widely studied in the literature \cite{amitol1992,baribarod2011,baribarod2016,buf1996,bufchatol1996,chatol1993}. It is well known that the system exhibits a bifocal equilibrium point at the origin $\mathcal{O}$ when $\eta_3 \in (-2,2)$, that is, the linearization at $\mathcal{O}$ has two pairs of imaginary eigenvalues with real parts of opposite signs. Our goal is to provide novel insights into how the two-dimensional invariant manifolds at $\mathcal{O}$, the unstable $W^u(\mathcal{O})$ and the stable $W^s(\mathcal{O})$, intersect in phase space. In particular, we improve our understanding of the mechanisms that lead to the formation and disappearance of bifocal homoclinic orbits.

The family \eqref{sistema_principal} can be written as a fourth-order scalar differential equation,
\begin{equation}
\label{4thorder_equation}
u^{(iv)}+Pu''+u-u^2=0,
\end{equation}
with $\eta_3=-P$. This equation arises in at least two different applications, the buckling of a strut on a nonlinear elastic foundation \cite{hunboltho1989,hunwad1991}, and as an approximation to traveling solitary waves in the presence of surface tension \cite{amikir1989,grimalben1994,iooper1993}. A brief description of related models can be found in \cite{bufchatol1996}. Moreover, the system \eqref{sistema_principal} can be obtained as a subsystem of the limit family associated with generic unfoldings of the four-dimensional nilpotent singularity of codimension four. To bear this out, as argued in \cite{druibarod2007}, we recall that any generic unfolding of a four-dimensional nilpotent singularity of codimension four can be written, after reduction to normal formal and considering convenient parameters, as
\begin{equation*}
  \left\{\begin{array}{l}
    x_1' = x_2,\\
    x_2' = x_3,\\
    x_3' = x_4,\\
    x_4' = \mu_1 + \mu_2 x_2 + \mu_3 x_3 + \mu_4 x_4 + x_1^2 + h(x,\mu),
  \end{array}\right.
\end{equation*}
where $h$ verifies $h(0,\boldsymbol{\mu})=0$, $(\partial h/\partial x_i)(0,\boldsymbol{\mu})=0$ for $i=1,\ldots,4$, $(\partial^2 h/\partial x^2_1)(0,\boldsymbol{\mu})=0$, $h(\boldsymbol{x},\boldsymbol{\mu})=O(\|(\boldsymbol{x},\boldsymbol{\mu})\|^2)$, and $h(\boldsymbol{x},\boldsymbol{\mu})=O(\|(x_2,x_3,x_4)\|)$, with $\boldsymbol{x}=(x_1,x_2,x_3,x_4)$ and $\boldsymbol{\mu}=(\mu_1,\mu_2,\mu_3,\mu_4)$. 

We can rescale variables and parameters to transform the above system into
\begin{equation*}
  \left\{\begin{array}{l}
    \bar x_1' = \bar x_2,\\
    \bar x_2' = \bar x_3,\\
    \bar x_3' = \bar x_4,\\
    \bar x_4' = \bar\nu_1 + \bar\nu_2 \bar x_2 + \bar \nu_3 \bar x_3 + \bar \nu_4 \bar x_4 + \bar x_1^2 + O(\varepsilon),
  \end{array}\right.
\end{equation*}
with $\bar\nu_1^2+\bar\nu_2^2+\bar\nu_3^2+\bar\nu_4^2=1$, $\varepsilon>0$ and $(\bar x_1,\bar x_2,\bar x_3,\bar x_4) \in A$, a fixed ball in $\mathbb{R}^4$. Understanding the limit family (when $\varepsilon=0$) is clearly a key point in the study of the general unfolding. For this purpose, instead of a spherical blow-up in the parameter space, it is more convenient to apply a directional rescalings. In particular, we consider the above limit family with $\bar\nu_i=\pm 1$ fixed and all others $\bar\nu_j$ varying in $\mathbb{R}$ for $j \neq i$. Finally, we take $\bar\nu_1=-1$ to obtain
\begin{equation*}
  \left\{\begin{array}{l}
    \bar x_1' = \bar x_2,\\
    \bar x_2' = \bar x_3,\\
    \bar x_3' = \bar x_4,\\
    \bar x_4' = -1 + \bar\nu_2 \bar x_2 + \bar \nu_3 \bar x_3 + \bar \nu_4 \bar x_4 + \bar x_1^2,
  \end{array}\right.
\end{equation*}
with $(\bar\nu_2,\bar\nu_3,\bar\nu_4)\in\mathbb{R}^3$. The above system has two equilibrium points $(\pm 1,0,0,0)$. Translating $(-1,0,0,0)$ to the origin and, after a final rescaling of variables and parameters, we get
\begin{equation}
\label{eq:limit_family}
  \left\{\begin{array}{l}
    x_1' = x_2,\\
    x_2' = x_3,\\
    x_3' = x_4,\\
    x_4' = -x_1 + \eta_2 x_2 + \eta_3 x_3 + \eta_4 x_4 + x_1^2.
  \end{array}
  \right.
\end{equation}
Although the whole parameter space is important for the study of the unfolding of the four-dimensional nilpotent singularity of codimension four, the subfamily  \eqref{sistema_principal} obtained for $\eta_2=\eta_4=0$, has an extra relevance as a result of its properties. 
\begin{remark}
    Precisely because we want to keep in mind the role of \eqref{sistema_principal} as a subsystem within family \eqref{eq:limit_family}, we retain the subindex in the parameter $\eta_3$. The terms affected by the parameters $\eta_2$ and $\eta_4$ are only relevant in the context of the unfolding of the four-dimensional nilpotent singularity of codimension four.
\end{remark}

Some basic properties of system \eqref{sistema_principal} are summarized below:
\begin{enumerate}
\item[(\textbf{P1})] The family is time-reversible, namely, it is invariant under the involution
\begin{equation}
R:(x_1,x_2,x_3,x_4)\longrightarrow(x_1,-x_2,x_3,-x_4)
\label{reversor}
\end{equation}
and the time reverse $t \to -t$.
\item[(\textbf{P2})] It has the first integral
\begin{equation}
H(x_1,x_2,x_3,x_4)=\frac{1}{2}x_1^2-\frac{1}{3}x_1^3-\frac{\eta_3}{2}x_2^2+x_2x_4-\frac{1}{2}x_3^2.
\label{hamiltonian}
\end{equation}
\item[(\textbf{P3})] There exist two equilibrium points, one at the origin $\mathcal O=(0,0,0,0)$ and the other at $(1,0,0,0)$. Moreover, equilibria belong to different level sets of the Hamiltonian function $H$.
\item[(\textbf{P4})] Regarding the linear part at $\mathcal O$, it can be checked that it has:
\begin{itemize}
    \item four pure imaginary eigenvalues $\pm\, \omega_1 i$ and $\pm\, \omega_2 i$ when $\eta_3 < -2$, with
    \[
    \omega_1=\sqrt{(-\eta_3+\sqrt{\eta_3^2-4})/2}, \qquad
    \omega_2=\sqrt{(-\eta_3-\sqrt{\eta_3^2-4})/2},
    \]
    \item two double pure imaginary eigenvalues $\pm\, i$ when $\eta_3 = -2$,
    \item four complex eigenvalues $\rho \pm \omega i$ and $-\rho \pm \omega i$ when $-2 < \eta_3 < 2$, with
    \[
    \rho=\sqrt{\eta_3+2}/2, \qquad
    \omega=\sqrt{-\eta_3+2}/2,
    \]
    \item two double real eigenvalues $\pm\, 1$ when $\eta_3 = 2$,
    \item four non-zero real eigenvalues $\pm\, \lambda_1$ and $\pm\, \lambda_2$ when $\eta_3 > 2$, with
        \[
    \lambda_1=\sqrt{(\eta_3+\sqrt{\eta_3^2-4})/2}, \qquad
    \lambda_2=\sqrt{(\eta_3-\sqrt{\eta_3^2-4})/2}.
    \]
\end{itemize} 
\end{enumerate}
Since we are interested in the behavior exhibited by the invariant manifolds at $\mathcal O$, and $H(1,0,0,0)$ is not equal to $H(\mathcal{O})$, the point $(1,0,0,0)$ will play no role in all the discussions to come. We only point out that the linear part at $(1,0,0,0)$ always has a pair of real eigenvalues and a pair of complex eigenvalues with a nonzero real part.

Motivated by the already mentioned role that the system \eqref{sistema_principal} has in certain applications, the existence of homoclinic orbits has been extensively studied in the literature. Applying the results given in \cite{hoftol1984}, the authors in \cite{amitol1992} proved that there is a unique and $R$-invariant intersection between $W^u(\mathcal{O})$ and $W^s(\mathcal{O})$ for all $\eta_3 \geq 2$. The transversality of this intersection within the level set $H^{-1}(0)$ was argued in \cite{bufchatol1996}. In this last paper, it was also proven that, when $\eta_3=2$, the results in \cite{chatol1993} can be applied to conclude that there is a Belyakov-Devaney bifurcation \cite{belshi1990,dev1976ham}. As a consequence, it follows the existence of infinitely many $n$-modal secondary homoclinic orbits for all integers $n\geq 2$ and $\eta_3$ close enough to $2$; these orbits cross $n$ times a transversal section to the primary homoclinic orbit. Heuristic arguments in \cite{bufchatol1996}, supported by numerical results, show that the non-degenerate $n$-modal homoclinic orbits arising at $\eta_3=2$ gradually disappear through a cascade of tangent bifurcations as $\eta_3$ decreases from $2$ to $-2$. Part of these bifurcations were theoretically analyzed in \cite{kno1997}. From \cite{iooper1993}, it follows that there are at least two symmetric homoclinic orbits for $\eta_3>-2$ and close enough to $-2$. Furthermore, it is known \cite{buf1995} that there is at least one homoclinic orbit for all $\eta_3 \in (-2,2)$. As pointed out in \cite{bufchatol1996}, when the intersection between the two-dimensional invariant manifolds $W^u(\mathcal{O})$ and $W^s(\mathcal{O})$ is transverse along such a homoclinic orbit, the results in \cite{dev1976ham} regarding the existence of infinitely many homoclinics and horseshoes in $H^{-1}(0)$ apply to all $\eta_3 \in (-2,2)$. Other interesting references can be found in \cite{buf1996}.

The entire discussion about the existence of homoclinic orbits for parameter values in the interval $(-2,2)$ corresponds to the bifocal case. The non-resonant bifocal homoclinic orbits, where contractivity and expansivity are different, were already studied by Shilnikov \cite{shi1967}, who proved the existence of a countable set of periodic orbits. The study of the general case continued with the subsequent articles \cite{fowspa1991,ibarod2015,laigle1997,shi1970,wig2003}. As we have already mentioned, Devaney \cite{dev1976ham} considered the Hamiltonian case, proving that in any section which is transverse to the homoclinic orbit, and within the level set that contains it, there exists a compact hyperbolic invariant set where the dynamics is conjugated with a Bernoulli shift with $N$ symbols. Lerman extended this result in \cite{ler1991,ler1997,ler2000} to prove that, in the level sets close to $H^{-1}(0)$ and over the appropriate cross section, there also exist an infinite number of two-dimensional horseshoes. With an analogous perspective, it was proven in \cite{baribarod2016}  that, in the neighborhood of the bifocal homoclinic orbit, there are invariant compact sets for the return map where the dynamics is conjugated to a Bernoulli shift times the identity.

The similarity between the Hamiltonian and reversible cases has been highlighted on many occasions in the literature (for instance, \cite{dev1976rev,lamrob1998}). In \cite{dev1977}, Devaney proved that both homoclinic and reversible bifocal homoclinic orbits are approximated by a one-parameter family of periodic orbits. Later, H\"arterich \cite{har1998} showed that, in the reversible case, in the neighborhood of the primary homoclinic orbit and for each $n\geq 2$ there are infinitely many $n$-homoclinic orbits and each of them is approximated by one or more families of periodic orbits. In \cite{homlam2006}, the authors studied the existence of horseshoes under the hypothesis that one of the periodic orbits in the family which approximates the connection, has its own associated homoclinic orbit. Without the need for this hypothesis, it was proven in \cite{barrairod2019} that, in the neighborhood of a non-degenerate bifocal homoclinic orbit, for every $n\geq 2$, there is a return map with an invariant set where the dynamics is conjugated to a horseshoe with $n$-symbols.

\begin{remark}
    We refer to \cite{lamrob1998} for a fairly complete survey and an extensive bibliography on reversible dynamical systems.
\end{remark}

All the results we have mentioned above, within the Hamiltonian or reversible context, can be applied to the family \eqref{sistema_principal}. In particular, as argued in \cite{bufchatol1996}, the existence of horseshoes in the neighborhood of a bifocal homoclinic orbit allows us to explain the folding of the invariant manifolds which, in subsequent passes through the equilibrium environment, wrap and unwrap over themselves. The terminology \emph{tentacle-like geometry} is used in \cite[Section 1.1]{bufchatol1996} to refer to the print that the invariant manifolds leave over an appropriate cross section. In this paper, we provide a novel perspective of this tentacle-like geometry, as an alternative tool for studying the genesis and destruction of homoclinic orbits that could be also useful in future discussions about some of the conjectures formulated in \cite{bufchatol1996}. Indeed, in the context of this four-dimensional dynamics, our visualization of the tentacular geometry allows us to place the cocoon bifurcations, that classically appear in reversible three-dimensional systems \cite{dumibakok2006,lau1992}.

We employ a geometric approach to explore the existence of bifocal homoclinic orbits. 
In particular, we study the intersections between the two-dimensional invariant manifolds, $W^u(\mathcal{O})$ and $W^s(\mathcal{O})$, and a three-dimensional cross section containing $\mathrm{Fix}(R)$. The latter is the set of points that remain invariant by the involution $R$, defined in \eqref{reversor}. Moreover, since we are interested in homoclinic orbits to $\mathcal{O}$, and because $H$ (introduced in \eqref{hamiltonian}) is kept constant along orbits and $H(\mathcal{O})=0$, we only need to pay attention to the set of points on the cross section where $H$ is zero. Thus, we can visualize the intersections onto a two-dimensional coordinate system.

Our approach is similar to that used in \cite{lau1992} to study the Michelson system (see \cite{mic1986})
\begin{equation}
  \left\{\begin{array}{l}
    x_1' = x_2,\\
    x_2' = x_3,\\
    x_3' = c^2- \dfrac{x_1^2}{2} - x_2,
  \end{array}\right.
  \label{michelson_system}
\end{equation}
where $c$ is a positive parameter. This family is invariant under the involution
\begin{equation*}
(x_1,x_2,x_3) \longrightarrow (-x_1,x_2,-x_3)
\end{equation*}
and the time reverse $t \to -t$. For all $c>0$, there exist two saddle-focus equilibrium points $P_{\pm}=(\pm \sqrt{2} c,0,0)$ such that $\mathrm{dim}(W^s (P_-))=\mathrm{dim}(W^u (P_+))=2$. With the goal of studying the intersections between these two-dimensional invariant manifolds, Lau considered a cross section containing the set of fixed points of the reversibility and wondered how the invariant manifold intersects it. As we will see later, the geometric structures discovered by Lau are similar to those shown in our study of system \eqref{sistema_principal}. Noticeably, the Michelson system is related to the study of unfolding of the three-dimensional nilpotent singularity of codimension three (see \cite{dumibakok2001,dumibakok2006,dumibakoksim2013}) and it is also relevant to the study of traveling-wave solutions of the Kuramoto-Sivashinsky equation (see \cite{mic1986}). Furthermore, in \cite{carferter2008} a piecewise linear version of the Michelson system is considered.

Finally we describe the structure of the present work. In Section \ref{sec:cross}, we explain how we visualize the geometry of the intersections between the invariant manifolds and an appropriate cross section, as well as the numerical techniques used to approximate such manifolds. We also discuss the properties of these intersections. Our method leads to a labeling of homoclinic orbits, which is also discussed. Section \ref{sec:intersections} covers the evolution of the first four intersections for a set of parameter values. Section \ref{sec:cocoon} shows how, under certain conditions, the results in \cite{dumibakok2006} can be extended to the case of Hamiltonian reversible four-dimensional systems, concluding the existence of cascades of homoclinic tangencies that accumulate on a saddle-node periodic orbit. We also provide numerical evidence of the existence of these cascades in system \eqref{sistema_principal}. To conclude, Section \eqref{sec:discussion} discusses additional aspects and indicates potential future research directions.

\section{Invariant manifolds and cross section}
\label{sec:cross}
As we have mentioned above, our goal is to study some aspects of the evolution of invariant manifolds at the origin. As a consequence of the reversibility, we pay attention only to the unstable invariant manifold $W^u(\mathcal{O})$. Properties of the stable invariant manifold, $W^s(\mathcal{O})$, follow by symmetry.

\subsection{Local invariant manifold}
Linear part at $\mathcal{O}$ is given by the matrix
$$
A=\begin{pmatrix}
0 & 1 & 0 & 0
\\
0 & 0 & 1 & 0
\\
0 & 0 & 0 & 1
\\
-1 & 0 & \eta_3 & 0
\end{pmatrix}.
$$
If we assume $\lambda$ is an eigenvalue of $A$, the eigenvectors can be written as
$$
(x_1,\lambda\, x_1,\lambda^2\, x_1,\lambda^3\, x_1).
$$
Since we are only interested in the case of complex eigenvalues $\lambda = \rho \pm i \omega$, with a nonzero real part, it is easy to deduce that
$$
v=(1,\rho+\omega i,\rho^2 - \omega^2 + 2\, \rho\,\omega i,\rho\,(\rho^2 - 3\, \omega ^2) + \omega\,(3\, \rho^2 - \omega^2) i)
$$
is a complex eigenvector associated to $\rho + i \omega$. Therefore, a basis for the tangent space to $W^u(\mathcal{O})$ at the origin is given by its real and imaginary parts:
$$
\{
(1, \rho, \rho^2-\omega^2, \rho\,(\rho^2-3\,\omega^2)),
(0, \omega, 2\,\rho\,\omega, \omega\,(3\,\rho^2-\omega^2))
\}.
$$
After a straightforward computation, the equations of the tangent plane are
\begin{equation}
\label{eq:plano_tangente}
x_3= -(\rho^2+\omega^2)\, x_1 + 2\, \rho\, x_2, \qquad
x_4= -2\, \rho\, (\rho^2+\omega^2)\,x_1 +(3\,\rho^2-\omega^2)\, x_2.
\end{equation}
We conclude that the \emph{local} unstable manifold can be written as a graph with respect to $(x_1,x_2)$, that is,
\begin{equation}
W^u_{loc}(\mathcal{O}) = \left\{ (x_1, x_2, x_3, x_4) \in \mathbb{R}^4 \, : \, x_3 = a (x_1, x_2), \, x_4 = b(x_1, x_2) \right\},
\label{eq:local_unstable}
\end{equation}
where $a$ and $b$ are analytic functions in a neighbourhood of $(0,0)$.

We consider the power series around the origin of both $a$ and $b$ to write
\begin{equation}
  x_3 = \sum_{M = 1}^{\infty} \sum_{i = 0}^M a_{M-i, i} \, x_1^{M - i}\, x_2^i
  \qquad \mbox{and} \qquad
  x_4 = \sum_{M = 1}^{\infty} \sum_{i = 0}^M b_{M-i, i} \, x_1^{M-i}\, x_2^i.
  \label{series_de_Taylor}
\end{equation}
According to the computations above, it follows that
$$
a_{10} = -(\rho^2+\omega^2),      \quad a_{01} = 2 \, \rho, \quad
b_{10}=-2 \, \rho \, (\rho^2+\omega^2), \quad b_{01} = (3\,\rho^2-\omega^2).
$$

As usual, applying the invariance condition, the coefficients $a_{M -s, s}$ and $b_{M-s, s}$ are computed up to a certain (high) order $M$. In this case, we take derivative
\begin{equation*}
\left\{
\begin{aligned}    
x_3' = \frac{\partial a}{\partial x_1} x_1'+ \frac{\partial a}{\partial x_2} x_2',
\\
x_4' = \frac{\partial b}{\partial x_1} x_1'+ \frac{\partial b}{\partial x_2} x_2',
\end{aligned}
\right.
\end{equation*}
to obtain the invariance equations
\begin{equation*}
\left\{
\begin{aligned}    
b(x_1,x_2) =&\frac{\partial a}{\partial x_1}(x_1,x_2)\, x_2+ \frac{\partial a}{\partial x_2}(x_1,x_2)\, b(x_1,x_2),
\\
-x_1+\eta_3\, a(x_1,x_2) + x_1^2 =& \frac{\partial b}{\partial x_1}(x_1,x_2)\, x_2+ \frac{\partial b}{\partial x_2}(x_1,x_2)\, b(x_1,x_2).
\end{aligned}
\right.
\end{equation*}
With standard but lengthy computations, we obtain a $2 (M+1) \times 2 (M+1)$ system in the unknowns $a_{M-s, s}$ and $b_{M-s, s}$, for each $M = 1, 2, \ldots$ and $s = 0, 1, \ldots, M$:
\begin{equation*}
   \begin{array}{l}
     \left\{\begin{array}{ll}
       \displaystyle\sum_{_{k = 1}}^M a_{k - 1, 1} \,a_{M - k + 1, 0} = b_{M, 0} & \\[3ex]
       (M - s + 1)\, a_{M - s + 1, s - 1} + c_{M - s, s} = b_{M - s, s} & s = 1,
       \ldots, M\\[1ex]
       \displaystyle\sum_{_{k = 1}}^M b_{k - 1, 1} \,a_{M - k + 1, 0} = \eta_3\, a_{M, 0} +
       \eta_4\, b_{M, 0} - \delta_{M 1} + \delta_{M 2} & \\[3ex]
       (M - s + 1) \, b_{M - s + 1, s - 1} + d_{M - s, s} = \eta_3\, a_{M - s, s} & s = 1, \ldots, M
     \end{array}\right.\\[2ex]
   \end{array}
\end{equation*}
with
\[
c_{M - s, s} = \sum_{k = 1}^M \sum_{i = l_k}^{L_k} i\,a_{k - i, i}\, a_{M - k + i - s, s + 1 - i} \quad \mbox{and} \quad d_{M - s, s} = \sum_{k = 1}^M \sum_{i = l_k}^{L_k} i\,b_{k - i, i}\, a_{M - k + i - s, s + 1 - i},
\]
where $l_k = \text{max} \{ s + k - M, 1 \}$, $L_k = \text{min} \{ s + 1,
k \}$ and $\delta_{x y} = \delta_{x y z} = 1$ only when $x = y = z$ but $0$ otherwise. Those systems are solved in increasing order for $M = 1, 2,
\ldots$. Once we compute $a_{M - s, s}$ and $b_{M - s, s}$, we can only  evaluate (\ref{series_de_Taylor}) on a certain disk of convergence centered at $(0, 0)$, in the plane $(x_1, x_2)$.
\begin{remark}
    In the numerical approximation, $M$ is adapted for each initial condition
    $$(x_1, x_2, a(x_1, x_2), b(x_1, x_2)).$$
    The series convergence criterion estimates the relative error of the sum, which is approximated by the included highest order ($M$) term in \eqref{series_de_Taylor}, divided by the present value of the sum. The required $M$ increases when $\eta_3$ approaches $- 2$, where $M \lesssim 40$. 
\end{remark}
\begin{remark}
    The method used to approximate invariant manifolds is a particular case of parameterization methods, widely used (their main ideas) since the beginning of the last century, in the search for invariant objects of dynamical systems \cite{Haro_2016}. 
\end{remark}

\subsection{Fundamental domains}
In order to compute the global unstable invariant manifold, we consider a curve $D^u(\mathcal{O}) \subset W^u_{loc}(\mathcal{O})$, homeomorphic to $\mathbb{S}^1$, such that all orbits in $W^u(\mathcal{O})$ cross $D^u(\mathcal{O})$. Thus, the invariant manifold can be obtained by forward numerical integration of initial conditions on $D^u(\mathcal{O})$. Namely, any curve
\begin{equation}
\label{eq:fundamentaldomain}
D^u(\mathcal{O})=\{(x_1,x_2,a(x_1,x_2),b(x_1,x_2)): x_1=r\cos\theta,\, x_2=r\sin\theta,\, r=r^*, \,\theta \in ]0,2\pi]\},
\end{equation}
such that the circle of radius $r^*$ in the plane $(x_1,x_2)$ is contained in the domain of convergence of the power series \eqref{series_de_Taylor}, satisfies that all orbits in $W^u(\mathcal{O})$ intersect $D^u(\mathcal{O})$. Ideally, $D^u(\mathcal{O})$ should be a fundamental domain, that is, any orbit in $W^u(\mathcal{O})$ should cross $D^u(O)$ once and only once. We cannot affirm a priori that $D^u(\mathcal{O})$ is a fundamental domain, but as we explain below, this follows from numerical computations.

We use a Taylor numerical integrator \cite{Jorba_2005} to approximate the trajectories of  \eqref{sistema_principal}. As a check, at each time instant, we verify that the computed solution $x(t)$ satisfies $H(x(t))\approx0$, provided that $H(x(0))\approx0$, where $H$ is the first integral defined in \eqref{hamiltonian}.

\subsection{Cross section}
To get an understanding of the complex geometry of the invariant manifolds, we analyze their intersection with a suitable cross section, namely
\[
S=\{(x_1,x_2,x_3,x_4)\,:\,x_2=0\}.
\]
In the sequel, we consider the coordinates $(x_1,x_3,x_4)$ in $S$. On the other hand, since $W^u(\mathcal{O}) \subset \{H=0\}$, it follows that $W^u(\mathcal{O}) \cap S \subset C$, where 
\[
C=S \cap \{H=0\}=\left\{(x_1,x_2,x_3,x_4)\,:\,x_2=0,\, \frac{1}{2}x_1^2-\frac{1}{3}x_1^3-\frac{1}{2}x_3^2=0\right\}
\]
is a loop cylinder (see Figure \ref{fig:loop_cylinder}).
\begin{figure}[t]
    \centering
    \includegraphics[width=0.7\linewidth]{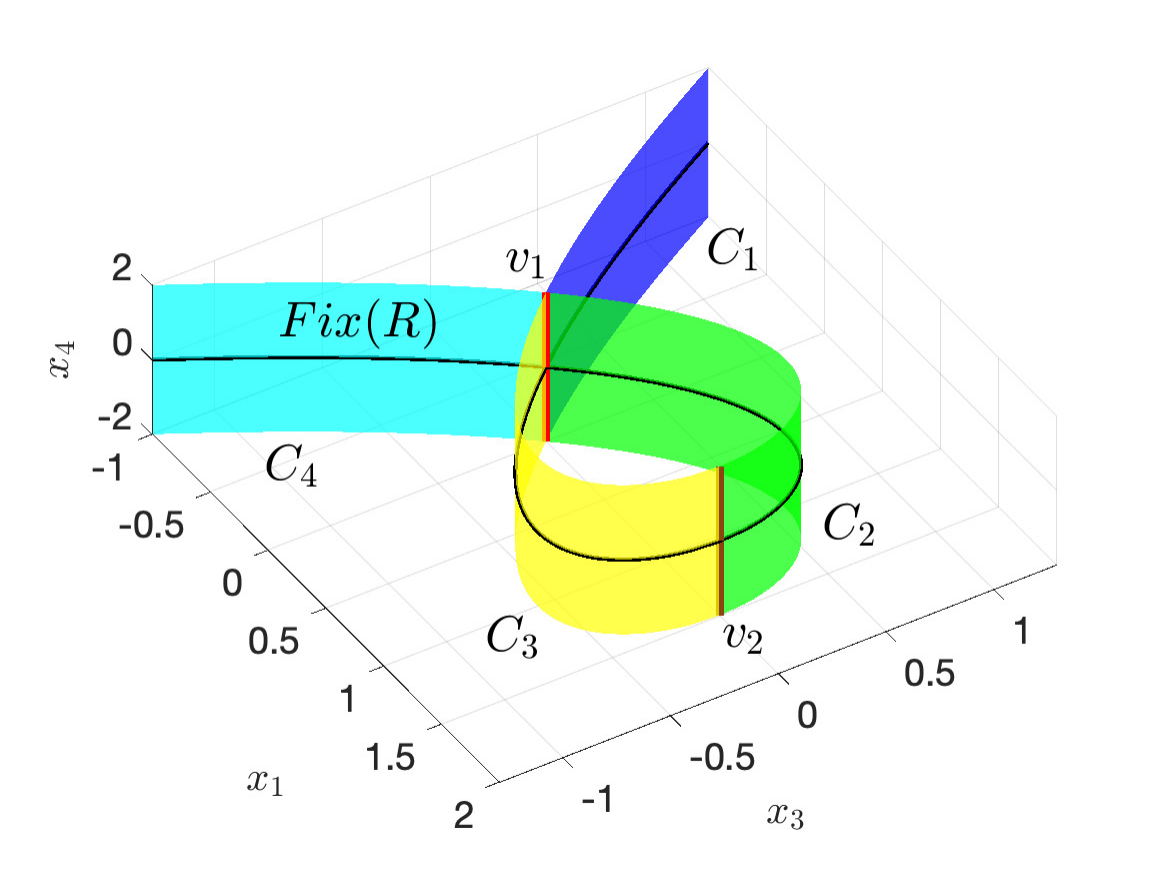}
    \caption{The loop cylinder $C$ is defined as the intersection of the surface $S$ with the level set ${H=0}$. Invariant manifolds intersect the cross section $S$ through the loop cylinder $C$. The vertical lines $v_1$ and $v_2$, where $x'_2=0$, are depicted in red and brown, respectively, and the regions $C_i$, with $i=1,2,3,4$, are colored blue, green, yellow and cyan, respectively. On $C_1$ and $C_2$, $x'_2$ is positive, whereas $x'_2$ is negative on $C_3$ and $C_4$.}
    \label{fig:loop_cylinder}
\end{figure}
On $C$, we distinguish two vertical lines,
\[
\begin{aligned}
    v_1=&\{(x_1,x_3,x_4)\,:\,x_1=x_3=0\},
    \\
    v_2=&\{(x_1,x_3,x_4)\,:\,x_1=1.5,\,x_3=0\},
\end{aligned}
\]
that split $C$ in four regions
\[
\begin{aligned}
    C_1=&\{(x_1,x_3,x_4) \in C \, : \, x_1<0, \, x_3>0\},
    \\
    C_2=&\{(x_1,x_3,x_4) \in C \, : \, x_1>0, \, x_3>0\},
    \\
    C_3=&\{(x_1,x_3,x_4) \in C \, : \, x_1>0, \, x_3<0\},
    \\
    C_4=&\{(x_1,x_3,x_4) \in C \, : \, x_1<0, \, x_3<0\}.
\end{aligned}
\]  
Since $x'_2=x_3$, when an orbit crosses $S$ through $C_1$ or $C_2$, respectively $C_3$ or $C_4$, then $x_2$ is strictly increasing, respectively, decreasing. On the other hand, the orbits through the points in $v_1-\{\mathcal{O}\}$ and $v_2$ are tangent to the section $S$. As explained above, the intersections between the invariant manifolds and $S$ are contained in the loop cylinder $C$. Moreover, due to reversibility, $W^u(\mathcal{O}) \cap S$ and $W^s(\mathcal{O}) \cap S$ are symmetric with respect to $x_4=0$. A three-dimensional representation of $C$, as shown in Figure \ref{fig:loop_cylinder}, does not facilitate the study of these intersections. For this reason, we focus on planar projections of the regions $C_i$. Specifically, we introduce the below shifted projection
\[
P:(x_1,x_3,x_4) \in C \longrightarrow (\bar x_1,x_4) \in \mathbb{R}^2,
\]
where
\[
\bar x_1=\left\{
\begin{aligned}
    x_1 - 1.5 \quad \mbox{if} \quad & x_3 \geq 0,
    \\
    1.5 - x_1 \quad \mbox{if} \quad & x_3 < 0.
\end{aligned}
\right.
\]

\begin{figure}
    \centering
    \includegraphics[width=0.7\linewidth]{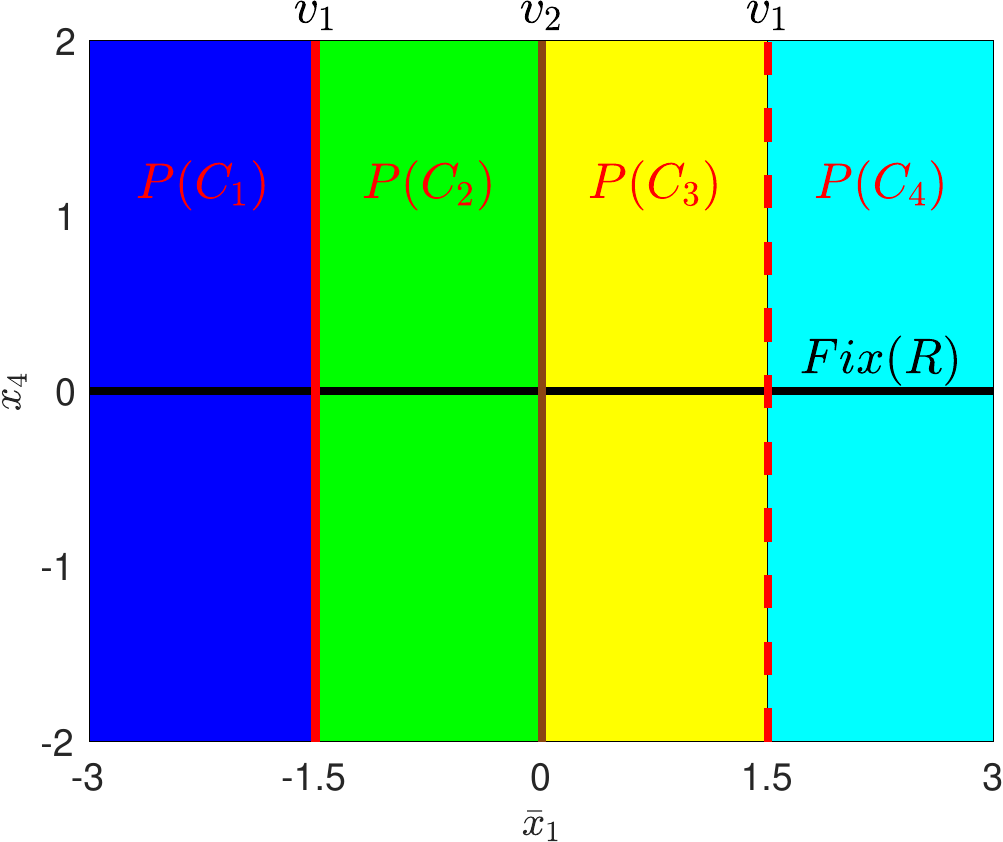}
    \caption{Shifted projection of the loop cylinder $C$ onto a plane. It establishes a bijection between $C$ and the $(\bar x_1, x_4)$ plane, excluding the vertical line $\bar x_1=1.5$.}
    \label{fig:cilindroabierto}
\end{figure}

We notice that $P$ establishes a bijection between $C$ and $\{(\bar x_1, x_4) \in \mathbb{R}^2 : \bar x_1 \neq 1.5\}$. While no point in $C$ projects onto $\bar x_1=1.5$, for the sake of convenience, we can consider that all projections onto the vertical line through $\bar x_1= -1.5$, representing the projection of $v_1$, are duplicated on the vertical line through $\bar x_1=1.5$. In Figure \ref{fig:cilindroabierto}, we show the images of the relevant regions and lines in $C$ under the projection $P$, namely
\[
\begin{aligned}
    P(C_1)&=\{(\bar x_1,x_4) \in  \mathbb{R}^2: \, \bar x_1 < -1.5 \},
    \\
    P(C_2)&=\{(\bar x_1,x_4) \in  \mathbb{R}^2: \, -1.5 < \bar x_1 < 0 \},
    \\
    P(C_3)&=\{(\bar x_1,x_4) \in  \mathbb{R}^2: \, 0 < \bar x_1 < 1.5 \},
    \\
    P(C_4)&=\{(\bar x_1,x_4) \in  \mathbb{R}^2: \, 1.5 < \bar x_1 \},
    \\
    P(v_1)&=\{(\bar x_1,x_4) \in  \mathbb{R}^2: \, \bar x_1=-1.5\},
    \\
    P(v_2)&=\{(\bar x_1,x_4) \in \mathbb{R}^2: \, \bar x_1 = 0\}.
\end{aligned}
\]

\subsection{Intersection between the invariant manifolds and the cross section}
\begin{figure}
    \centering
    \includegraphics[width=0.5\linewidth]{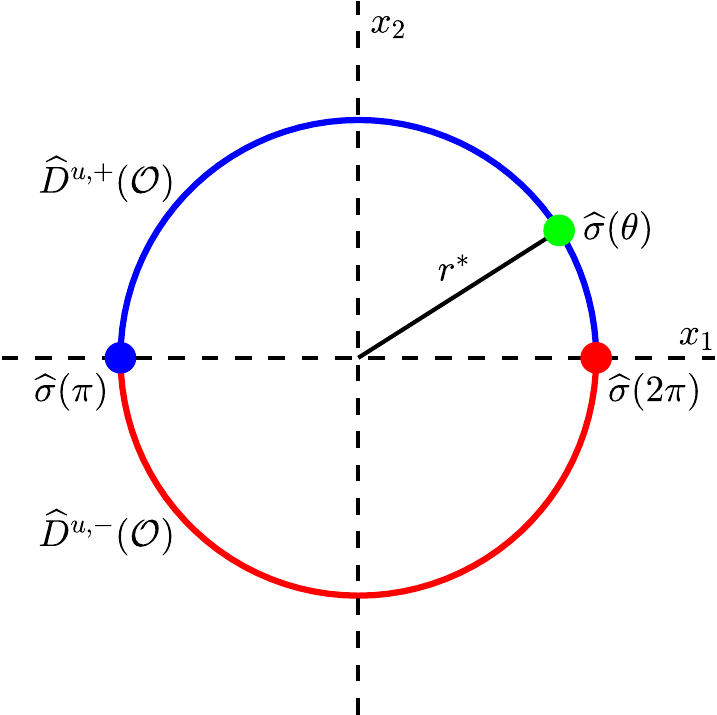}
    \caption{Projection of the fundamental domain $D^u(\mathcal{O})$ on the plane $(x_1,x_2)$. $\widehat{D}^{u,+}$ (in blue) and $\widehat{D}^{u,-}$ (in red) are the projections of $D^{u,+}$ and $D^{u,-}$, respectively. Initial points in $D^{u,+}$, respectively $D^{u,-}$, first intersect $C$ at a point where $x_2$ decreases, respectively increases. The notation $\widehat{\sigma}(\theta)$ refers also to the projection of $\sigma(\theta) \in D^u$.}
    \label{fig:fundamental_domain}
\end{figure}
In order to study the geometry of the intersections between the invariant manifolds and the cross section, we consider a fundamental domain $D^u(\mathcal{O})$ as given in \eqref{eq:fundamentaldomain}. In the sequel, $D^u(\mathcal{O})$ will be parameterized by the angle, that is,
\[
D^u=\{\sigma(\theta):\, \theta\in ]0,2\pi]\},
\]
where
\[
\sigma(\theta)=(x_1,x_2,a(x_1,x_2),b(x_1,x_2)),\quad x_1=r^*\cos\theta,\ x_2=r^*\sin\theta,
\]
with $a$ and $b$ as given in \eqref{eq:local_unstable}.
Furthermore, we consider a partition $D^u=D^{u,+} \cup D^{u,-}$ (see Figure \ref{fig:fundamental_domain}), where
\[
D^{u,+}=\{\sigma(\theta):\, \theta\in ]0,\pi]\} \quad
\mbox{and} \quad D^{u,-}=\{\sigma(\theta):\, \theta\in ]\pi,2\pi]\}.
\]
In the sequel, $\varphi$ denotes the flow of the vector field \eqref{sistema_principal}. We define
\[
\Sigma^u = \bigcup_{\theta\in ]0,2\pi]} \left(\{\varphi(t,\sigma(\theta)): \, t > 0\} \cap S\right) \subset C
\]
and then divide $\Sigma^u$ into separated pieces as follows. First, let be
\[
\Sigma^{u,+}_1 = \{\varphi(t_1(p),p) \in S\,:\,p \in D^{u,+}\} \quad \mbox{and} \quad \Sigma^{u,-}_1 = \{\varphi(t_1(p),p) \in S\,:\,p \in D^{u,-}\},
\]
where $t_1(p)>0$ is such that $\varphi(t_1(p),p) \in S$, but $\varphi(t,p) \notin S$ for all $t \in ]0,t_1(p)[$. Second, for $k \geq 1$, we define
\[
\Sigma^{u,+}_{k+1} = \{\varphi(t_{k+1}(p),p) \in S\,:\,p \in \Sigma^{u,+}_{k}\} \quad \mbox{and} \quad \Sigma^{u,-}_{k+1} = \{\varphi(t_{k+1}(p),p) \in S\,:\,p \in \Sigma^{u,-}_{k}\},
\]
where $t_{k+1}(p)>0$ is such that $\varphi(t_{k+1}(p),p) \in S$, but $\varphi(t,p) \notin S$ for all $t \in ]0,t_{k+1}(p)[ $. Finally, let
\[
\Sigma^{u}_{k} = \Sigma^{u,+}_{k} \cup \Sigma^{u,-}_{k},
\]
for each $k \in \mathbb{N}$.
\begin{figure}[t]
    \centering
    \includegraphics[width=0.7\linewidth]{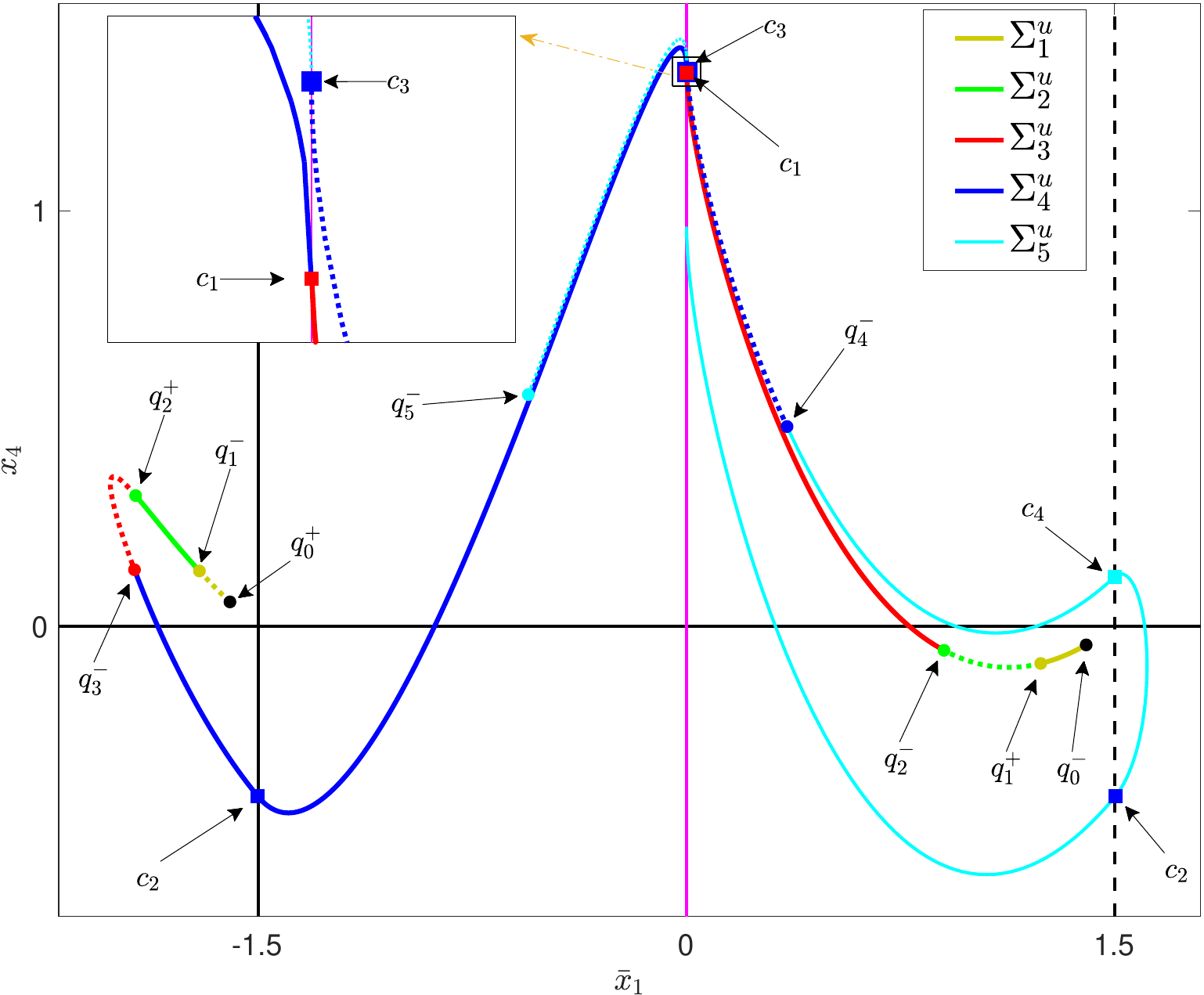}
    \caption{The first five intersections $\Sigma^{u,+}_k$ (solid lines), $\Sigma^{u,-}_k$ (dotted lines), for $k=1,\ldots,5$ and $\eta_3=-1.73$ on $P(C)$. Different colors are used for each $k$. We denote $q_0^+=\sigma(\pi) \in D^{u,+}$ and $q_0^-=\sigma(2\pi) \in D^{u,-}$. Moreover, $q_{k}^\pm=\varphi(t_{k},q_{k-1}^\pm)\in \Sigma_{k}^{u,\pm}$ for each $k\geq 1$. Points $c_i$ for $i=1,2,3$ are related to the propositions \ref{prop:corte_v1_arriba}, \ref{prop:corte_v1_abajo}, \ref{prop:corte_v2_arriba}, and \ref{prop:corte_v2_abajo}.}
    \label{fig:cinco_intersecciones}
\end{figure}
In Figure \ref{fig:cinco_intersecciones}, we show the pieces $\Sigma^{u,+}_k$ (solid lines) and $\Sigma^{u,-}_k$ (dotted lines) for $k=1,\ldots,5$, when $\eta_3=-1.73$.

Similarly, we can define sets $D^s$, $\Sigma^s$ and $\Sigma_k^{s,\pm}$. Nevertheless, because of the reversibility, they are easily obtained by $R(D^u)$, $R(\Sigma^u)$ and $R(\Sigma_k^{u,\pm})$, respectively, where $R$ is given in \eqref{reversor}. Indeed, each piece $\Sigma_k^{s,\pm}$ is the symmetric of $\Sigma_k^{u,\pm}$ with respect to the plane $x_4=0$. 

\begin{proposition}
\label{prop:prop_basicas}
The properties below are satisfied:
\begin{enumerate}
    \item If $r^*$ is small enough, then $\sigma(\pi) \in C_1$ and $\sigma(2\pi) \in C_3$.
    \item If $r^*$ is small enough, then $\varphi(t,\sigma(\pi)) \in \{x_2>0\}$  for $t>0$ small enough, respectively $\varphi(t,\sigma(2\pi)) \in \{x_2<0\}$.
    \item Let $\bar x=(\bar x_1,\bar x_2,\bar x_3,\bar x_4)$, with $\bar x_2>0$ (respectively $\bar x_2<0$),  and assume that there exists $\tau(\bar x)>0$ such that $\bar x^*=\varphi(\tau(\bar x),\bar x) \in S$, but $\varphi(t,\bar x) \notin S$ for all $t \in ]0,\tau(\bar x)[ $, then $\bar x^* \in \overline{C_3 \cup C_4}$ (respectively $\bar x^* \in \overline{C_1 \cup C_2}$).
    \item If $\bar x \in (C_1 \cup C_2) \cap \Sigma_k^{u}$ (respectively $\bar x \in (C_3 \cup C_4) \cap \Sigma_k^{u}$)  and there exists $\bar x^* = \varphi(t_{k+1}(\bar x),\bar x) \in \Sigma_{k+1}^{u}$, then $\bar x^* \in \overline{C_3 \cup C_4}$ (respectively $\bar x^* \in \overline{C_1 \cup C_2}$).
\end{enumerate}
\end{proposition}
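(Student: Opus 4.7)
The plan is to handle the four items sequentially, relying on the Taylor data for the local unstable manifold for (1)--(2) and a continuity (flow-box) argument for (3)--(4).

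For (1), I would substitute $\sigma(\pi)=(-r^*,0,a(-r^*,0),b(-r^*,0))$ into the series for $x_3=a(x_1,x_2)$. With the explicit coefficients $a_{10}=-(\rho^2+\omega^2)$ and $a_{01}=2\rho$ recorded after \eqref{eq:plano_tangente}, one obtains $x_3=(\rho^2+\omega^2)\,r^*+O(r^{*2})$, which is strictly positive for $r^*>0$ small enough; combined with $x_1=-r^*<0$, this places $\sigma(\pi)$ in $C_1$. The evaluation at $\theta=2\pi$ is analogous and yields $\sigma(2\pi)\in C_3$. Item (2) then follows immediately from $x_2'=x_3$: the second coordinate is strictly increasing through $\sigma(\pi)$ (resp. strictly decreasing through $\sigma(2\pi)$), and since it vanishes at $t=0$, it must be strictly positive (resp. strictly negative) for small $t>0$.

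For (3), the key observation is that the hypothesis $\varphi(t,\bar x)\notin S$ for $t\in{]0,\tau(\bar x)[}$ means the $x_2$-coordinate of $\varphi(t,\bar x)$ never vanishes on that interval. Being continuous in $t$ and starting from $\bar x_2>0$, it therefore stays strictly positive on $[0,\tau(\bar x))$. Since it vanishes at $t=\tau(\bar x)$, we must have $x_2'(\tau(\bar x))=x_3(\bar x^*)\leq 0$, which places $\bar x^*$ in $\overline{C_3\cup C_4}$, the case $x_3(\bar x^*)=0$ corresponding to the boundary lines $v_1\cup v_2$. The $\bar x_2<0$ case is identical after reversing signs.

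Item (4) reduces to (3). If $\bar x\in(C_1\cup C_2)\cap\Sigma^u_k$ then $x_3(\bar x)>0$, so $x_2'>0$ at $\bar x$ and the orbit enters $\{x_2>0\}$ just after $\bar x$. For any sufficiently small $\epsilon>0$, the point $\bar y=\varphi(\epsilon,\bar x)$ satisfies $\bar y_2>0$; by definition of $t_{k+1}$, the trajectory through $\bar x$ does not meet $S$ for $t\in{]\epsilon,t_{k+1}(\bar x)[}$, so the first return of $\bar y$ to $S$ is $\bar x^*=\varphi(t_{k+1}(\bar x)-\epsilon,\bar y)$, and (3) applied to $\bar y$ gives $\bar x^*\in\overline{C_3\cup C_4}$. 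The case $\bar x\in C_3\cup C_4$ is symmetric. No step is a real obstacle; the only subtlety to flag is that $C_3,C_4$ are open (excluding $v_1\cup v_2$), so the conclusion naturally lies in the closure precisely when the return hits $v_1$ or $v_2$, which is already reflected by the overline in the statement.
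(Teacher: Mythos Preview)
Your proposal is correct and follows essentially the same approach as the paper: the first-order Taylor data $a(-r^*,0)=(\rho^2+\omega^2)r^*+O(r^{*2})$ for (1)--(2), the sign-preservation/derivative argument for (3), and the reduction of (4) to (3) via a short forward shift. The only minor point you leave implicit is that $\sigma(\pi)\in C$ (because $x_2=0$ and $\sigma(\pi)\in W^u(\mathcal O)\subset H^{-1}(0)$), which the paper states explicitly before checking the sign conditions for $C_1$.
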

\begin{proof}
(1) By definition,
\[
\sigma(\pi)\equiv (\sigma_1(\pi),\sigma_2(\pi),\sigma_3(\pi),\sigma_4 (\pi))=(-r^*,0,a(-r^*,0),b(-r^*,0)).
\]
Since $\sigma_2(\pi)=0$, we have $\sigma(\pi) \in S$. On the other hand, as $\sigma(\pi)\in W^u(\mathcal{O})$, we obtain $H(\sigma(\pi))=0$ and, consequently, $\sigma(\pi)\in C$. From the expressions given in \eqref{eq:plano_tangente} for the tangent plane to $W^u(\mathcal{O})$ at $\mathcal{O}$, it follows that $a(-r^*,0)=(\rho^2+\omega^2)r^*+O((r^*)^2)$. Therefore, $a(-r^*,0)>0$ if $r^*$ is small enough. Thus, as $\sigma_1(\pi)<0$ and $\sigma_3(\pi)>0$, we conclude that $\sigma(\pi) \in C_1$. Arguments to prove that $\sigma(2\pi) \in C_3$ are analogous.

(2) We write a first order expansion of $x_2(t)$,
\[
x_2(t)=x_2(0)+x'_2(0)\, t+O(t^2)=x_2(0)+x_3(0)\, t+O(t^2).
\]
If we consider the solution with initial condition $\sigma(\pi)=(-r^*,0,a(-r^*,0),b(-r^*,0))$, then
\[
x_2(t)=a(-r^*,0)\, t+O(t^2).
\]
As argued above, $a(-r^*,0)>0$ if $r^*$ is small enough. Therefore, if $t$ is small enough, $x_2(t)>0$. For the initial condition $\sigma(2\pi)=(r^*,0,a(r^*,0),b(r^*,0))$, we have $a(r^*,0)<0$ for $r^*$ small, and hence the result follows the same lines.

(3) Let $\varphi(t,\bar x)=(x_1(t),x_2(t),x_3(t),x_4(t))$. If $x_2(t)>0$ for all $t \in [0,\tau(\bar x)[$ and $x_2(\tau(\bar x))=0$, then $x_2$ cannot be strictly increasing at $\tau(\bar x)$, i.e. $x_3(\tau(\bar x))=x'_2(\tau(\bar x))>0$ cannot occur and hence $\bar x^* = \varphi(\tau(\bar x),\bar x) \notin C_1 \cup C_2$. The case with $x_2(t)<0$ for all $t \in [0,\tau(\bar x)[$ and $x_2(\tau(\bar x))=0$ is argued in a similar manner to prove that $\bar x^* \notin C_3 \cup C_4$.

(4) Assume that $\bar x \in (C_1 \cup C_2) \cap \Sigma_k^{u}$ and let $\varphi(t,\bar x)=(x_1(t),x_2(t),x_3(t),x_4(t))$. A first order expansion of $x_2(t)$ is given by
\[
x_2(t)=x_2(0)+x'_2(0)\, t+O(t^2)=x_3(0)\, t+O(t^2).
\]
Since $\bar x \in C_1 \cup C_2$, by definition we have $x_3(0)>0$ and therefore $x_2(t)>0$ for $t>0$ small enough. Now, we apply Property 3 to get the result. The case $\bar x \in (C_3 \cup C_4) \cap \Sigma_k^{u}$ is proved in an analogous manner.
\end{proof}

Let us denote $q_0^+=\sigma(\pi) \in D^{u,+}$ and $q_0^-=\sigma(2\pi) \in D^{u,-}$. Moreover, we write $q_{k}^\pm=\varphi(t_{k}(q_{k-1}^\pm),q_{k-1}^\pm)\in \Sigma_{k}^{u,\pm}$ for each $k\geq 1$. In Figure \ref{fig:cinco_intersecciones}, we show $q_k^+$, for $k=0,1,2$, and $q_k^-$, for $k=0,\ldots,5$, when $\eta_3=-1.73$. According to (1) in Proposition \ref{prop:prop_basicas}, $q_0^+ \in C_1$ and $q_0^- \in C_3$ (the two black dots in Figure \ref{fig:cinco_intersecciones}). Notice that they do not belong to $\Sigma^u$. From properties (2) and (3) in Proposition \ref{prop:prop_basicas}, it follows that $q_1^+  \in C_3 \cup C_4$ and $q_1^-  \in C_1 \cup C_2$. In fact, we observe that $q_1^+  \in C_3$ and $q_1^- \in C_1$. Subsequent alternance between $C_1 \cup C_2$ and $C_3 \cup C_4$ is a direct consequence of the property (4) in Proposition \ref{prop:prop_basicas}. In particular, we get that $q_2^+ \in C_3$, $q_2^-,q_4^- \in C_3$, $q_3^-\in C_1$ and $q_5^- \in C_2$. We also point out that the positive orbit of $q_2^+$ has no more intersections with $S$. According to the numerical results, it becomes an unbounded orbit. In the same way, the sets $\Sigma^{u,\pm}_k$ also illustrate the rule provided by property (4) in Proposition \ref{prop:prop_basicas}. 

\begin{figure}
    \centering
    \includegraphics[width=0.7\linewidth]{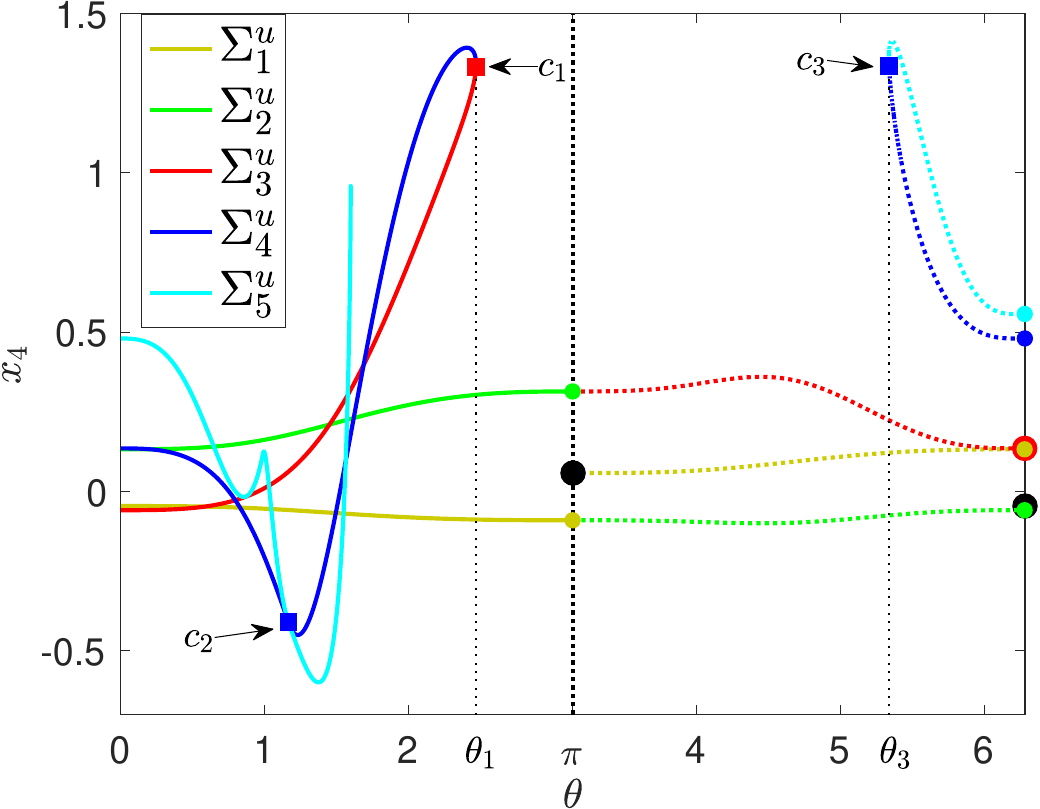}
    \caption{A plot of the values of $x_4$ along $\Sigma^u_i$, with $i=1,\ldots,5$, as a function of the angle that parameterizes $D^u$. Solid (respectively dotted) lines correspond to the intersections of $D^{u,+}$ (respectively $D^{u,-}$). This figure helps to a better understanding of Figure \ref{fig:cinco_intersecciones}.}
    \label{fig:angulos}
\end{figure}
For a full understanding of Figure \ref{fig:cinco_intersecciones}, we provide the Figure \ref{fig:angulos}. For each $\theta \in ]0,2\pi]$, we plot $\Psi_4(\varphi(t_k(\sigma(\theta)),\sigma(\theta)))$ for different values of $k$, where $\Psi_4$ denotes the projection on the $x_4$-axis. The piece $\Sigma_1^{u,+} \subset C_3$ (solid olive green line) is mapped into $\Sigma_2^{u,+} \subset C_1$ (solid green line) and a part of this one, at least, is mapped into $\Sigma_3^{u,+} \subset C_3$ (solid red line). There is no third intersection for angles in $]\theta_1,\pi]$ (see Figure \ref{fig:angulos}). Next, $\Sigma_4^{u,+}$ (solid blue line) is a continuous line on $C$, but it splits into two pieces, one of them contained in $C_1$ and the other in $C_2$. Finally, $\Sigma_5^{u,+}$ (solid cyan line) is a continuous line on $C$ that splits into three pieces, one of them contained in $C_4$ and the others in $C_3$. On the other hand, we have that the piece $\Sigma_1^{u,-} \subset C_1$ (dotted olive green line) is mapped into $\Sigma_2^{u,-} \subset C_3$ (dotted green line), and this one into $\Sigma_3^{u,-} \subset C_1$ (dotted red line). And a part of $\Sigma_3^{u,-}$ is mapped into $\Sigma_4^{u,-} \subset C_3$ (dotted blue line). Note that there is no fourth intersection for angles in $]\pi,\theta_3]$ (see Figure \ref{fig:angulos}). Finally, we also show that $\Sigma_5^{u,-} \subset C_2$ (solid cyan line).

Results below are useful to understand the geometry of $\Sigma^u$ at the points $c_1$, $c_2$ and $c_3$ (see Figures \ref{fig:cinco_intersecciones} and \ref{fig:angulos}). In the sequel, $\Pi$ stands for the Poincaré map from $C$ to $C$, whenever it is well defined. 

\begin{proposition}
\label{prop:corte_v1_arriba}
    Assume that there exists a point $q=(0,0,0,q_4)\in\Sigma^u_k \cap v_1$, with $q_4>0$, where $W^u(\mathcal{O})$ intersects transversely $v_1$. Then, locally around $q$, $\Sigma^u_k\setminus \{q\} = \Sigma^{u,3}_k \cup \Sigma^{u,4}_k$, with $\Sigma^{u,m}_k \subset C_m$ for $m=3,4$. Moreover, there exist two open arcs $\Sigma^{u,m}_{k+1} \subset \Sigma^u_{k+1} \cap C_m$, with $m=1,2$, such that $\Sigma^{u,1}_{k+1}=\Pi(\Sigma^{u,4}_{k})$, $\Sigma^{u,2}_{k+1}=\Pi(\Sigma^{u,3}_{k})$ and $q$ is a common end point of both $\Sigma^{u,1}_{k+1}$ and $\Sigma^{u,2}_{k+1}$.
\end{proposition}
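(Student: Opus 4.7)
The plan is to reduce the proposition to a Morse-style analysis of $W^u(\mathcal{O})\cap S$ at the tangency point $q$, by writing $W^u(\mathcal{O})$ as a graph over the coordinates $(x_1,x_3)$ and using the first integral $H$ to pin down the $2$-jet of its $x_2$-component; the remaining claims are then sign book-keeping for the reduced flow on $W^u(\mathcal{O})$.

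First I would observe that the flow velocity at $q$ is $(0,0,q_4,0)$ and that $dH|_q=q_4\,dx_2|_q$, so $T_q\{H=0\}=T_qS=\{v_2=0\}$; in particular $W^u(\mathcal{O})$ is tangent to $S$ at $q$. Transversality of $W^u(\mathcal{O})$ with $v_1$ inside the $3$-dimensional $\{H=0\}$ reads $T_qW^u(\mathcal{O})+\mathrm{span}(\partial_{x_4})=T_q\{H=0\}$; together with the velocity lying in $T_qW^u(\mathcal{O})$, this makes $(u_1,u_2):=(x_1,x_3)$ local coordinates on $W^u(\mathcal{O})$ at $q$. Writing $W^u(\mathcal{O})=\{(u_1,f(u_1,u_2),u_2,g(u_1,u_2))\}$ yields $f(0,0)=0$, $\nabla f(0,0)=0$ and $g(0,0)=q_4$. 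Next, $H\equiv 0$ on $W^u(\mathcal{O})$ gives
\[
f\,g=\tfrac{\eta_3}{2}f^2+\tfrac{1}{2}(u_2^2-u_1^2)+\tfrac{1}{3}u_1^3,
\]
and since $f=O(|u|^2)$ the left-hand side equals $q_4 f+O(|u|^3)$. Matching second-order terms yields
\[
f(u_1,u_2)=\frac{(u_2-u_1)(u_2+u_1)}{2q_4}+O(|u|^3),
\]
a non-degenerate saddle (Hessian determinant $-1/q_4^{2}<0$). By the Morse lemma, $W^u(\mathcal{O})\cap S=\{f=0\}$ is, in a neighbourhood of $q$, the union of two smooth arcs $\gamma_+$ and $\gamma_-$ crossing at $q$ with tangent directions $u_2=u_1$ and $u_2=-u_1$ respectively. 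In $(x_1,x_3)$-coordinates the two half-branches of $\gamma_+$ sit in $C_4$ and $C_2$, and those of $\gamma_-$ in $C_3$ and $C_1$.

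Second, I would identify which halves populate $\Sigma_k^u$ and $\Sigma_{k+1}^u$. The reduced flow on $W^u(\mathcal{O})$ reads $u_1'=f$, $u_2'=g$, with $u_2'(q)=q_4>0$, so $u_2$ is strictly monotone along orbits near $q$ while $u_1$ stays essentially frozen. For each nearby orbit with $u_1\approx u_1^*\neq 0$, the two zeros of $f$ along it are the pair $u_2=\pm u_1^*$, ordered in time as a ``down'' crossing ($u_2<0$, i.e.\ $x_3<0$) followed by an ``up'' crossing ($u_2>0$). Since $q\in\Sigma_k^u$ and consecutive intersections alternate between $C_1\cup C_2$ and $C_3\cup C_4$ by property~(4) of Proposition~\ref{prop:prop_basicas}, the first is the $k$-th and the second the $(k+1)$-th intersection. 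Hence locally $\Sigma_k^u\setminus\{q\}$ is the union of the $u_1<0$ half of $\gamma_+$ (in $C_4$) and the $u_1>0$ half of $\gamma_-$ (in $C_3$), defining $\Sigma_k^{u,4}$ and $\Sigma_k^{u,3}$; $\Sigma_{k+1}^u$ locally consists of the complementary halves of $\gamma_\pm$, in $C_2$ and $C_1$. Then $\Pi$ is computed by following each orbit from its first to its second zero of $f$: for $u_1^*<0$, $(u_1^*,u_1^*)\in\gamma_+\cap C_4$ is sent to $(u_1^*,-u_1^*)\in\gamma_-\cap C_1$, and for $u_1^*>0$, $(u_1^*,-u_1^*)\in\gamma_-\cap C_3$ is sent to $(u_1^*,u_1^*)\in\gamma_+\cap C_2$. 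Setting $\Sigma_{k+1}^{u,1}:=\Pi(\Sigma_k^{u,4})$ and $\Sigma_{k+1}^{u,2}:=\Pi(\Sigma_k^{u,3})$ gives the claim, and letting $u_1^*\to 0^\pm$ shows that both arcs accumulate at $q$ as their common endpoint.

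The main obstacle is the second step: making rigorous that $H\equiv 0$ on $W^u(\mathcal{O})$, combined only with the local graph structure, determines the Hessian of $f$ at $q$ and in particular forces the indefinite signature $(-1/q_4,\,1/q_4)$ that produces the saddle with the correct opening relative to the sign of $q_4$. A related subtlety is that $\Sigma_k^u$ is only piecewise smooth through $q$: its two pieces $\Sigma_k^{u,3}$ and $\Sigma_k^{u,4}$ are ``down'' halves of \emph{different} smooth arcs $\gamma_\pm$, so $q$ is a corner of $\Sigma_k^u$ rather than a regular point, and the analogous statement holds for $\Sigma_{k+1}^u$.
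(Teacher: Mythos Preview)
Your proof is correct, and it proceeds by a route genuinely different from the paper's. The paper solves $H=0$ for $x_2$ near $q$ to reduce to the three-dimensional flow on $H^{-1}(0)$; there $W^u(\mathcal{O})$ is a surface, the section $S$ becomes the loop cylinder $C$ (two smooth sheets crossing along $v_1$), and transversality of $W^u(\mathcal{O})$ with the line $v_1$ immediately forces $W^u(\mathcal{O})\cap C$ to split into four arcs $\gamma_m\subset C_m$. The sign of $x_3'$ at $q$ then tells which pair is sent to which by the forward flow. You instead stay on $W^u(\mathcal{O})$: you parameterize it by $(x_1,x_3)$, use $H\equiv 0$ to compute the Hessian of the $x_2$-component and obtain a non-degenerate saddle, and then read off the four arcs and the Poincar\'e map from the internal two-dimensional flow $(u_1',u_2')=(f,g)$.

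The two arguments are essentially dual: the paper restricts to the ambient level set and intersects with a singular surface, while you restrict to the invariant manifold and study a Morse function on it. The paper's version is shorter and more geometric, but leaves the four-arc splitting as a one-line consequence of transversality. Your version is more explicit: it actually produces the tangent directions $x_3=\pm x_1$ of the crossing arcs and makes the corner structure of $\Sigma_k^u$ at $q$ visible. The only place where your write-up could be tightened is the sentence invoking property~(4) of Proposition~\ref{prop:prop_basicas} to decide which of the two nearby zeros is the $k$-th; the real reason is simply time ordering together with continuity of the $k$-th return as the orbit approaches the one through $q$, and property~(4) is then a consistency check rather than the mechanism.
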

\begin{proof}
    In a neighbourhood of $q$, the equation $H(x_1,x_2,x_3,x_4)=0$ defines $x_2$ as a function:
    \[
    x_2=f(x_1,x_3,x_4)=
    \left\{\begin{aligned}
    \dfrac{x_4-\sqrt{x_4^2+2\eta_3
    \left(\frac{1}{2}x_1^2-\frac{1}{3}x_1^3-\frac{1}{2}x_3^2\right)}}{\eta_3}
    & \quad \mbox{if} \quad \eta_3 \neq 0,
    \\[1ex]
    \dfrac{-\left(\frac{1}{2}x_1^2-\frac{1}{3}x_1^3-\frac{1}{2}x_3^2\right)}{x_4}
    & \quad \mbox{if} \quad \eta_3 = 0.
    \end{aligned}\right.
    \]
    Hence, the equations of the vector field reduced to the level zero set around $q$ can be written as follows:
    \begin{equation*}
    \left\{\begin{array}{l}
    x_1' = f(x_1,x_3,x_4),\\
    x_3' = x_4,\\
    x_4' = - x_1 + \eta_3 x_3 + x_1^2.
    \end{array}\right.
    \end{equation*}
    In this three-dimensional phase space, the cross section $S$ reduces to the surface of the loop cylinder $C$ in a neighborhood of $q$. Since $W^u(\mathcal{O})$ is transversal to the vertical line $v_1$ at $(x_1,x_3,x_4)=(0,0,q_4)\equiv\hat q$, we have that $(W^u(\mathcal{O})\setminus \{\hat q\}) \cap C$ splits into four curves $\gamma_m \subset C_m$, with $m=1,\dots,4$. Since the reduced vector field at $\hat q$ is $(0,q_4,0)$ and $q_4>0$, $x_3$ increases locally. Therefore, the forward flow sends $\gamma_4$ to $\gamma_1$ and $\gamma_3$ to $\gamma_2$. It follows that $\Sigma_k^{u,m}=\gamma_m$ for $m=3,4$, and $\Sigma_{k+1}^{u,m}=\gamma_m$ for $m=1,2$.
\end{proof}
We can prove, in a similar way, that the proposition below is also true.
\begin{proposition}
\label{prop:corte_v1_abajo}
    Assume that there exists a point $q=(0,0,0,q_4)\in\Sigma^u_k \cap v_1$, with $q_4<0$, where $W^u(\mathcal{O})$ intersects transversely $v_1$. Then, locally around $q$, $\Sigma^u_k\setminus \{q\} = \Sigma^{u,1}_k \cup \Sigma^{u,2}_k$ with $\Sigma^{u,m}_k \subset C_m$ for $m=1,2$. Moreover, there exist two open arcs $\Sigma^{u,m}_{k+1} \subset \Sigma^u_{k+1}$, with $m=3,4$, such that $\Sigma^{u,4}_{k+1}=\Pi(\Sigma^{u,1}_{k})$, $\Sigma^{u,3}_{k+1}=\Pi(\Sigma^{u,2}_{k})$ and $q$ is a common end point of both $\Sigma^{u,3}_{k+1}$ and $\Sigma^{u,4}_{k+1}$.
\end{proposition}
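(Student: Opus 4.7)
The plan is to adapt the proof of Proposition~\ref{prop:corte_v1_arriba} by tracking the single sign change introduced by the hypothesis $q_4 < 0$. First, I would repeat the Hamiltonian reduction: since $\nabla H(q) = (0,q_4,0,0)$ is nonzero whenever $q_4 \neq 0$, the implicit function theorem supplies a smooth local graph $x_2 = f(x_1,x_3,x_4)$ with $f(\hat q) = 0$ at $\hat q = (0,0,q_4)$, and the cross section $S$ reduces locally to the loop cylinder $C$. The one technical care needed is that the explicit branch of $f$ used in Proposition~\ref{prop:corte_v1_arriba} (the one with the minus sign in front of the square root, for $\eta_3 \neq 0$) would now give $f(\hat q) = 2q_4/\eta_3 \neq 0$; the correct root is therefore $f = \bigl[x_4 + \sqrt{x_4^2 + 2\eta_3(x_1^2/2 - x_1^3/3 - x_3^2/2)}\,\bigr]/\eta_3$. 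With this choice the reduced three-dimensional vector field on $(x_1,x_3,x_4)$ is smooth and of the same structural form as in the previous proposition.

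Next, I would evaluate the reduced field at $\hat q$ and obtain $(0,q_4,0)$. The decisive consequence is that $x_3$ is locally \emph{decreasing} along the flow, exactly the opposite of the situation treated in Proposition~\ref{prop:corte_v1_arriba}. Combined with the hypothesis that $W^u(\mathcal{O})$ meets $v_1$ transversely at $\hat q$, this forces the trace $(W^u(\mathcal{O}) \setminus \{\hat q\}) \cap C$ to split locally into four smooth arcs $\gamma_m \subset C_m$, $m = 1,2,3,4$. Since $x_3' < 0$ while $x_1' = 0$ at $\hat q$, a short flow segment preserves the sign of $x_1$ while flipping that of $x_3$; hence the Poincaré map $\Pi$ pairs them as $\Pi(\gamma_1) = \gamma_4$ and $\Pi(\gamma_2) = \gamma_3$.

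Finally, to identify these arcs with the objects in the statement, I would use that $q \in \Sigma^u_k$ sits on $v_1$, so the arcs arriving at $q$ under the $k$-th return are exactly $\Sigma^{u,1}_k = \gamma_1$ and $\Sigma^{u,2}_k = \gamma_2$, while their images form $\Sigma^{u,4}_{k+1} = \gamma_4$ and $\Sigma^{u,3}_{k+1} = \gamma_3$; continuity of $\Pi$ then yields $q$ as a common endpoint of these two latter arcs. I expect no substantial obstacle, since the whole argument is a careful sign-tracking exercise on top of the scheme already built for Proposition~\ref{prop:corte_v1_arriba}. The only subtle point worth double-checking is the branch selection in the Hamiltonian reduction, because the other root of the quadratic in $x_2$ would parametrize a different sheet of $\{H=0\}$ whose restriction to $S$ would not agree with the original four-dimensional dynamics near $q$.
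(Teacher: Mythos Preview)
Your proposal is correct and follows precisely the approach the paper intends: the paper's own proof is the single sentence that Proposition~\ref{prop:corte_v1_abajo} is proved ``in a similar way'' to Proposition~\ref{prop:corte_v1_arriba}, and your sign-tracking argument is exactly that adaptation. Your explicit remark that the Hamiltonian reduction near $q$ requires the \emph{other} root of the quadratic in $x_2$ (the branch with the plus sign) when $q_4<0$ is a genuine detail that the paper's one-line treatment leaves implicit.
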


\begin{proposition}
\label{prop:corte_v2_arriba}
    Assume that there exists a point $q=(3/2,0,0,q_4)\in\Sigma^u_k \cap v_2$, with $q_4>0$, where $W^u(\mathcal{O})$ intersects transversely $v_2$. Then, locally around $q$, $\Sigma^u_k\setminus \{q\}$ consists of an open arc $\Sigma^{u,3}_k \subset C_3$. Moreover, there exists an open arc $\Sigma^{u,2}_{k+1} \subset \Sigma^u_{k+1}$, having $q$ as an end point, such that $\Sigma^{u,2}_{k+1}=\Pi(\Sigma^{u,3}_{k})$ and $q$ is a common end point of both $\Sigma^{u,2}_{k+1}$ and $\Sigma^{u,3}_k$.
\end{proposition}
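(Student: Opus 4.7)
The plan is to adapt the three-dimensional reduction strategy employed in the proofs of Propositions~\ref{prop:corte_v1_arriba} and \ref{prop:corte_v1_abajo}, while accounting for a genuinely new feature at $v_2$: the flow is \emph{tangent} to the cross section $C$ at $q$ rather than transverse. Since $\partial H/\partial x_2(q)=q_4\neq 0$, the equation $H=0$ solves locally for $x_2=f(x_1,x_3,x_4)$ near $q$ and yields a reduced three-dimensional system on $(x_1,x_3,x_4)$ whose cross section with $S$ is the cylinder $C=\{G=0\}$ with $G(x_1,x_3)=x_3^2-x_1^2+\tfrac{2}{3}x_1^3$. Because $\partial_{x_1}G(q)=3/2\neq 0$, I would parameterize $C$ near $q$ by $(x_3,x_4)$ with $x_1=g(x_3)=3/2+O(x_3^2)$, so that $C$ is smooth near $q$, the line $v_2$ corresponds to $\{x_3=0\}$, and $C_3$, $C_2$ correspond to $\{x_3<0\}$ and $\{x_3>0\}$, respectively.

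The next step would exploit two geometric facts. First, the reduced vector field at $q$ equals $(0,q_4,3/4)$, which lies in $T_qC=\{\delta x_1=0\}$, so the flow is tangent to $C$ at $q$; a direct computation gives $\ddot G(q)=2q_4^2>0$ along the orbit of $q$, confirming a quadratic tangency with the orbit locally on the side $\{G\geq 0\}$. Second, the transversality hypothesis $T_qW^u\cap T_qv_2=\{0\}$, together with $T_qv_2\subset T_qC$, forces $T_qW^u\neq T_qC$ (otherwise $T_qv_2\subset T_qW^u$). Hence $W^u$ and $C$ meet transversely as two-dimensional surfaces in the three-dimensional level set, and $\Gamma:=W^u\cap C$ is a smooth arc through $q$ whose tangent at $q$ equals $T_qW^u\cap T_qC$; because the flow direction belongs to both tangent planes and spans a one-dimensional subspace, $\Gamma$ is tangent to the flow at $q$ and, within $C$, crosses $v_2$ transversely with sides in $C_3$ and $C_2$.

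To identify the two sides of $\Gamma$ with $\Sigma^{u,3}_k$ and $\Sigma^{u,2}_{k+1}$, I would return to the full four-dimensional flow and parameterize $W^u$ near $q$ by $(s,u)$, with $s$ the flow time and $u$ a transverse coordinate along a curve $p(u)\subset W^u$ with $p(0)=q$. Taylor expansion yields
\[
x_2(s,u)=Au+\tfrac{q_4}{2}s^2+O(\|(s,u)\|^3),
\]
with $A=\partial_u x_2(0,0)$, and the same transversality argument forces $A\neq 0$: otherwise $T_qW^u\subset\ker dx_2=T_qS$, which combined with $T_qW^u\subset T_qH^{-1}(0)$ gives $T_qW^u\subset T_qC$, the case already excluded. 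Consequently the zero set of $x_2$ on $W^u$ is the parabola $u(s)=-q_4s^2/(2A)+O(s^3)$; for each $u_0$ with $Au_0<0$, the orbit through $p(u_0)$ crosses $S$ twice near the passage by $q$, at times $s_\pm=\pm\sqrt{-2Au_0/q_4}+O(u_0)$, while orbits with $Au_0>0$ miss $S$ entirely near $q$. Since $x_3=\dot x_2=q_4 s+O(s^2,u)$ at a crossing, the earlier crossing lies in $\{x_3<0\}=C_3$ and is by definition the $k$-th intersection $\Sigma^{u,3}_k$, while the later crossing lies in $\{x_3>0\}=C_2$ and is the $(k+1)$-th intersection $\Sigma^{u,2}_{k+1}$. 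By construction $\Pi$ sends the first crossing of each such orbit to the second, yielding $\Sigma^{u,2}_{k+1}=\Pi(\Sigma^{u,3}_k)$, and both arcs limit to $q$ as $u_0\to 0$, sharing $q$ as a common endpoint.

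The delicate point is the bookkeeping around the quadratic tangency: although the underlying set $W^u\cap S$ is geometrically a single smooth arc through $q$, the Poincaré labeling naturally splits it into the two arcs $\Sigma^{u,3}_k\subset C_3$ and $\Sigma^{u,2}_{k+1}\subset C_2$ meeting at $q$, precisely because the orbit of $q$ touches $S$ tangentially and each nearby orbit contributes two consecutive intersections with $S$ that coalesce at $q$. Establishing $A\neq 0$ cleanly from the transversality hypothesis, and then matching the signs of $s$ and $x_3$ with the regions $C_3$ and $C_2$, are the steps requiring most care.
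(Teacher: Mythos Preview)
Your argument is correct and goes well beyond the paper's own proof. The paper simply repeats the three-dimensional reduction used for Proposition~\ref{prop:corte_v1_arriba}: it notes that near $\hat q$ the set $(W^u(\mathcal{O})\setminus\{\hat q\})\cap C$ splits into just two arcs $\gamma_2\subset C_2$ and $\gamma_3\subset C_3$ (the cylinder being smooth at $v_2$, so there are two regions rather than four), records $x_3'=q_4>0$, and concludes in one line that the forward flow carries $\gamma_3$ to $\gamma_2$. It does not explicitly address the quadratic tangency of the flow to $C$ at $q$, nor does it justify why $W^u$ and $C$ meet transversely as two-dimensional surfaces inside the level set.

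Your treatment fills in precisely these points: you verify the tangency via $\ddot G(q)=2q_4^2>0$, you deduce the transversality of $W^u$ to $C$ from the hypothesis $T_qW^u\cap T_qv_2=\{0\}$, and---going further than the paper---you return to the four-dimensional flow and exhibit the parabolic expansion $x_2(s,u)=Au+\tfrac{q_4}{2}s^2+\cdots$ on $W^u$, making fully explicit why each nearby orbit contributes two consecutive crossings of $S$ that coalesce at $q$, with the earlier one in $C_3$ and the later one in $C_2$. This four-dimensional computation is a genuinely different (and more careful) route to $\Sigma^{u,2}_{k+1}=\Pi(\Sigma^{u,3}_k)$ than the paper's one-sentence three-dimensional argument. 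You also correctly give the reduced vector field at $q$ as $(0,q_4,3/4)$; the paper writes $(0,q_4,0)$, a harmless slip since only the sign of the $x_3$-component matters.
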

\begin{proof}
As in the proof of Proposition \ref{prop:corte_v1_arriba}, we consider a reduction to a three-dimensional space by writing $x_2$ as a function of $(x_1,x_3,x_4)$ on the zero level set of $H$, in a neighbourhood of $q$. In the three-dimensional phase space, the cross section $S$ reduces to the surface of the loop cylinder $C$ in a neighborhood of $q$. Since $W^u(\mathcal{O})$ is transversal to the vertical line $v_2$ at $(x_1,x_3,x_4)=(3/2,0,q_4)\equiv\hat q$, we have that $(W^u(\mathcal{O})\setminus \{\hat q\}) \cap C$ splits into two curves $\gamma_m \subset C_m$, with $m=2,3$. Since the reduced vector field at $\hat q$ is $(0,q_4,0)$ and $q_4>0$, $x_3$ increases locally. Therefore, the forward flow sends $\gamma_3$ to $\gamma_2$. It follows that $\Sigma_k^{u,2}=\gamma_2$ and $\Sigma_{k+1}^{u,3}=\gamma_3$.
\end{proof}
We can prove, in a similar way, that the preposition below is also true.
\begin{proposition}
\label{prop:corte_v2_abajo}
    Assume that there exists a point $q=(3/2,0,0,q_4)\in\Sigma^u_k \cap v_2$, with $q_4<0$, where $W^u(\mathcal{O})$ intersects transversely $v_2$. Then, locally around $q$, $\Sigma^u_k\setminus \{q\}$ consists of an open arc $\Sigma^{u,2}_k \subset C_2$. Moreover, there exists an open arc $\Sigma^{u,3}_{k+1} \subset \Sigma^u_{k+1}$, having $q$ as an end point, such that $\Sigma^{u,3}_{k+1}=\Pi(\Sigma^{u,2}_{k})$ and $q$ is a common end point of both $\Sigma^{u,3}_{k+1}$ and $\Sigma^{u,2}_k$.
\end{proposition}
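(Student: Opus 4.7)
The strategy is to mirror the proof of Proposition~\ref{prop:corte_v2_arriba} step by step, with the sole difference being the sign of $q_4$. First, since $\partial_{x_2} H(q) = x_4|_q = q_4 \neq 0$, the equation $H = 0$ can be solved locally around $q$ for $x_2$ as a smooth function $x_2 = f(x_1, x_3, x_4)$. This reduces the dynamics on $\{H = 0\}$ near $q$ to the three-dimensional system
\[
x_1' = f(x_1,x_3,x_4), \quad x_3' = x_4, \quad x_4' = -x_1 + \eta_3 x_3 + x_1^2,
\]
in which the cross section $S$ becomes the two-dimensional surface of the loop cylinder $C$, exactly as in the proof of Proposition~\ref{prop:corte_v2_arriba}.

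Next, I would use the transversality hypothesis: $W^u(\mathcal{O})$ meets the line $v_2$ transversally at $\hat q = (3/2, 0, q_4)$. Since $v_2$ separates $C$ locally into the two pieces $C_2$ and $C_3$, it follows that $(W^u(\mathcal{O}) \setminus \{\hat q\}) \cap C$ splits into exactly two smooth arcs $\gamma_2 \subset C_2$ and $\gamma_3 \subset C_3$ meeting at $\hat q$. This is the same local structure as before; only the flow direction on it will change.

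Finally, I would evaluate the reduced vector field at $\hat q$, whose relevant component is $x_3' = x_4 = q_4 < 0$, so $x_3$ is strictly decreasing along the orbit through $\hat q$. Since $C_2$ lies in $\{x_3 > 0\}$ and $C_3$ in $\{x_3 < 0\}$, the forward flow in the reduced system carries $\gamma_2$ onto $\gamma_3$. Translating this back to the original four-dimensional flow, the orbit through any point of $\gamma_2$ near $q$ is tangent to $S$ with a non-degenerate tangency (since $x_2'' = x_4'|_{x_2=x_3=0} \neq 0$ at $\hat q$), so after leaving $S$ briefly it returns to $S$ at the corresponding point of $\gamma_3$ and this is the very next intersection. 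Consequently, $\gamma_2$ is the $k$-th intersection arc $\Sigma_k^{u,2}$ and $\gamma_3 = \Pi(\Sigma_k^{u,2})$ is the $(k+1)$-th intersection arc $\Sigma_{k+1}^{u,3}$, with $q$ as common endpoint, which is the claim.

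The argument presents no real obstacle beyond the previous proposition; it is essentially bookkeeping, with the roles of $C_2$ and $C_3$ swapped because the tangential flow direction at $\hat q$ is reversed when $q_4$ changes sign. The only subtlety worth double-checking is that the two arcs $\gamma_2$ and $\gamma_3$ are truly consecutive intersections of the same orbit with $S$, which is ensured by the non-degeneracy of the tangency and the continuity of $\Pi$ on $\gamma_2$.
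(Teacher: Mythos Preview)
Your proof is correct and follows exactly the approach the paper itself indicates, namely reducing to the three-dimensional flow on $H^{-1}(0)$ as in the proof of Proposition~\ref{prop:corte_v2_arriba} and observing that the sign of $q_4$ reverses the direction of $x_3'$ along $v_2$, swapping the roles of $C_2$ and $C_3$. One small slip: the non-degeneracy of the tangency with $S$ comes from $x_2'' = x_3' = x_4 = q_4 \neq 0$, not from $x_4'$.
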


In Figure \ref{fig:cinco_intersecciones}, at the point $c_2$ on $\Sigma_4^u \cap v_1$ (blue square at $\bar x_1=-1.5$), Proposition \ref{prop:corte_v1_abajo} can be applied, where $c_2$ takes the role of $q$ in the statement and $q_4 < 0$. Locally around $c_2$, two pieces on $\Sigma_4^u$ are distinguished: $\Sigma_4^{u,1} \subset C_1$ and $\Sigma_4^{u,2} \subset C_2$. The former is mapped into a curve $\Sigma_5^{u,4} \subset C_4$, whereas $\Sigma_4^{u,2}$ is mapped into a curve $\Sigma_5^{u,3} \subset C_3$. Moreover, we can observe how the counterpart of $c_2$ at $\bar x_1=1.5$ indeed serves as a common endpoint for $\Sigma_5^{u,3}$ and $\Sigma_5^{u,4}$. It is important to notice that this counterpart of $c_2$ does not belong to $\Sigma_5^u$. Hence, Proposition \ref{prop:corte_v1_abajo} does not apply to $\Sigma_5^u$ in a neighborhood of $c_2$. Nevertheless, $\Sigma_5^u$ intersects $v_1$ at a point $c_4$ (light blue square in the figure), where Proposition \ref{prop:corte_v1_arriba} is applicable. Authors in \cite{lau1992} and \cite{bufchatol1996} considered points similar to $c_2$ as double intersection points, which implies that $c_2$ is regarded, for them, as both the fourth and fifth cross.

On the other hand, at point $c_1$ (red square at $\bar x_1=0$), Proposition \ref{prop:corte_v2_arriba} can be applied. The piece of $\Sigma_3^{u}$ contained in $C_3$ with an end point at $c_1$ is mapped into the piece of $\Sigma_4^{u}$ contained in $C_2$ with an end point at $c_1$. This point $c_1$ is a turning point. The geometry around $c_3$ (big blue square behind $c_1$) is similar but involving $\Sigma_4^u$ and $\Sigma_5^u$.

\subsection{Homoclinic orbits}
Given $m$, $n\in \mathbb{N}$, each intersection point $q \in \Sigma^u_m \cap \Sigma^s_n$, if it exists, corresponds to a homoclinic orbit of the system. Moreover, the intersection between $\Sigma^u_m$ and $\Sigma^s_n$ determines the intersection between $\Sigma^u_{m+i}$ and $\Sigma^s_{n-i}$, for all $i=-m+1,-m +2,\ldots,n-2,n-1$. 
\begin{proposition}
 Let $k,l,m,n \in \mathbb{N}$ and assume that $k+l=m+n$. Then $\# (\Sigma^u_k \cap \Sigma^s_l)=\# (\Sigma^u_m \cap \Sigma^s_n)$.
\end{proposition}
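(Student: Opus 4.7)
The natural plan is to exhibit a bijection between consecutive intersection sets along the diagonal $k+l=\text{const}$: concretely, I will show that the first-return Poincar\'e map $\Pi$ on $S$ restricts to a bijection $\Sigma^u_k\cap\Sigma^s_l \to \Sigma^u_{k+1}\cap\Sigma^s_{l-1}$ whenever $l\geq 2$, and then iterate. Since $k+l=m+n$ with all four indices in $\mathbb{N}$, I may assume without loss of generality $k\leq m$, so that applying $\Pi$ exactly $m-k$ times takes $(k,l)$ to $(m,n)$, with the intermediate stable indices always at least $n\geq 1$.

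The first step is to track how $\Pi$ acts on the two index families. On the unstable side, $\Pi(\Sigma^u_k)\subseteq\Sigma^u_{k+1}$ holds by the very construction of these sets. The dual fact $\Pi(\Sigma^s_l)\subseteq\Sigma^s_{l-1}$ for $l\geq 2$ is what powers the bijection, and it will follow from the reversibility identity $R(\varphi(t,x))=\varphi(-t,R(x))$: given $q\in\Sigma^s_l=R(\Sigma^u_l)$, write $q=R(\varphi(t_l,p_{l-1}))$ with $p_{l-1}\in\Sigma^u_{l-1}$; then $\varphi(t_l,q)=R(p_{l-1})\in\Sigma^s_{l-1}$, and the ``no intermediate crossing of $S$'' clause for the forward orbit of $p_{l-1}$ transfers under $R$ into the same clause for the forward orbit of $q$, forcing $t_l$ to be the first positive return time of $q$ to $S$.

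The bijection then assembles readily: injectivity of $\Pi$ is automatic from the first-return definition, since two distinct forward preimages of a single point would force an earlier $S$-intersection for the one with larger transit time. For surjectivity I take $q'\in\Sigma^u_{k+1}\cap\Sigma^s_{l-1}$, produce its backward first-return $q\in\Sigma^u_k$ using the very definition of $\Sigma^u_{k+1}$, and repeat the reversibility argument on $R(q')\in\Sigma^u_{l-1}$ to conclude that $q\in\Sigma^s_l$. Iterating this bijection $m-k$ times and composing yields $\#(\Sigma^u_k\cap\Sigma^s_l)=\#(\Sigma^u_m\cap\Sigma^s_n)$, with the equality valid also in the $\#=\infty$ case since bijections preserve cardinality. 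I expect the main obstacle to be the careful verification that the sets $\Sigma^s_j$ are mutually disjoint on $W^s(\mathcal{O})\cap S$, so that the surjectivity step actually lands in the correct component $\Sigma^s_l$; equivalently, one needs every point of $W^s(\mathcal{O})\cap S$ to possess a well-defined number of $S$-crossings separating it from the fundamental domain $D^s$.
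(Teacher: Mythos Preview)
Your proposal is correct and follows essentially the same approach as the paper: both use the first-return Poincar\'e map $\Pi$ (and its inverse) to exhibit a bijection between $\Sigma^u_k\cap\Sigma^s_l$ and $\Sigma^u_m\cap\Sigma^s_n$ by shifting the indices one step at a time along the diagonal $k+l=\text{const}$. The paper's proof is considerably terser---it simply asserts $\Pi^j(q)\in\Sigma^u_{k+j}\cap\Sigma^s_{l-j}$ and the analogous statement for $\Pi^{-j}$ without spelling out the reversibility argument or the injectivity/surjectivity details that you supply.
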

\begin{proof}
Assume that $k<m$ and let $q \in \Sigma^u_k \cap \Sigma^s_l$. Then $\Pi^j(q)\in \Sigma^u_{k+j} \cap \Sigma^s_{l-j}$ for $j=1,\ldots,m-k$ . In particular, we get $\Pi^{m-k}(q)\in \Sigma^u_{m} \cap \Sigma^s_{n}$ when $j=m-k$. Conversely, if $q \in \Sigma^u_m \cap \Sigma^s_n$. Then $\Pi^{-j}(q)\in \Sigma^u_{m-j} \cap \Sigma^s_{n+j}$ for $j=1,\ldots,m-k$. In particular, we obtain $\Pi^{m-k}(q)\in \Sigma^u_{k} \cap \Sigma^s_{l}$ when $j=m-k$. The case $k>m$ is analogous.
\end{proof}

Each homoclinic orbit is identified with the corresponding intersections between $\Sigma^u$ and $\Sigma^s$. We say that a homoclinic orbit $\Gamma$ has \emph{order} $\alpha \in \mathbb{N}$ if $\Gamma \cap \Sigma^u_1 \cap \Sigma^s_{\alpha-1} \neq \varnothing$. In this case, there are (unique) intersections of $\Gamma$ with $\Sigma^u_i \cap \Sigma^s_{\alpha-i}$ for all $i=1,\ldots,\alpha-1$. For simplicity, we identify the homoclinic orbit of order $\alpha$ by
\[
\Gamma_\alpha\equiv [q_{i,\alpha-i}]_{i=1,\ldots,\alpha-1} = [q_{1,\alpha-1},q_{2,\alpha-2},\ldots,q_{\alpha-1,1}] ,
\]
where $q_{i,\alpha-i}$ is the (only) cross point of $\Gamma_\alpha$ with $\Sigma^u_i \cap \Sigma^s_{\alpha-i}$, for each $i =1,\ldots,\alpha-1$. The subscript will indicate the order of a homoclinic orbit whenever appropriate.

Moreover, we distinguish between symmetric and asymmetric homoclinic orbits.
\begin{definition}
     We say that a homoclinic orbit $\Gamma$ is symmetric if $R(\Gamma)=\Gamma$. Otherwise, we will say that $\Gamma$ is asymmetric.
\end{definition}

According to \cite[Lemma 3]{vanfie1992}, an orbit of a time-reversible system is symmetric if and only if it intersects $\mathrm{Fix}(R)$. As a consequence, each point $q \in \Sigma^u \cap \mathrm{Fix}(R)$ corresponds to a symmetric homoclinic orbit. Indeed, by symmetry $q= R(q) \in R(\Sigma^u)=\Sigma^s$ and therefore the orbit of $q$ is homoclinic and symmetric because it intersects $\mathrm{Fix}(R)$. Furthermore, as argued in \cite{har1998}, the intersection point of each symmetric homoclinic orbit with $\mathrm{Fix}(R)$ is unique. Thus, there is a one-to-one correspondence between symmetric homoclinic orbits and intersections between $\Sigma^u$ and $\mathrm{Fix}(R)$.

Assume that a homoclinic orbit of order $\alpha$, $\Gamma_\alpha$,
is symmetric. Therefore, there is a unique $q_{i,\alpha-i}\in \mathrm{Fix}(R)$. Since $q_{i,\alpha-i}\in \Sigma^u_i \cap \mathrm{Fix}(R)$, it follows that $q_{i,\alpha-i} = R(q_{i,\alpha-i}) \in R(\Sigma^u_i)=\Sigma^s_i$. On the other hand, by definition, $q_{i,\alpha-i} \in \Sigma^s_{\alpha-i}$ and taking into account that $\Sigma^s_i \cap \Sigma^s_j = \varnothing$ if $i\neq j$, we conclude that $i=\alpha-i$, that is, $\alpha$ is even. Moreover, with the same argument, given $\alpha$ even, if $q \in \Sigma^u_{\alpha/2} \cap \Sigma^s_{\alpha/2}$, but $q \notin \mathrm{Fix}(R)$, the homoclinic orbit through $q$ is asymmetric.

\begin{figure}[t]
    \centering
    \includegraphics[width=0.85\linewidth]{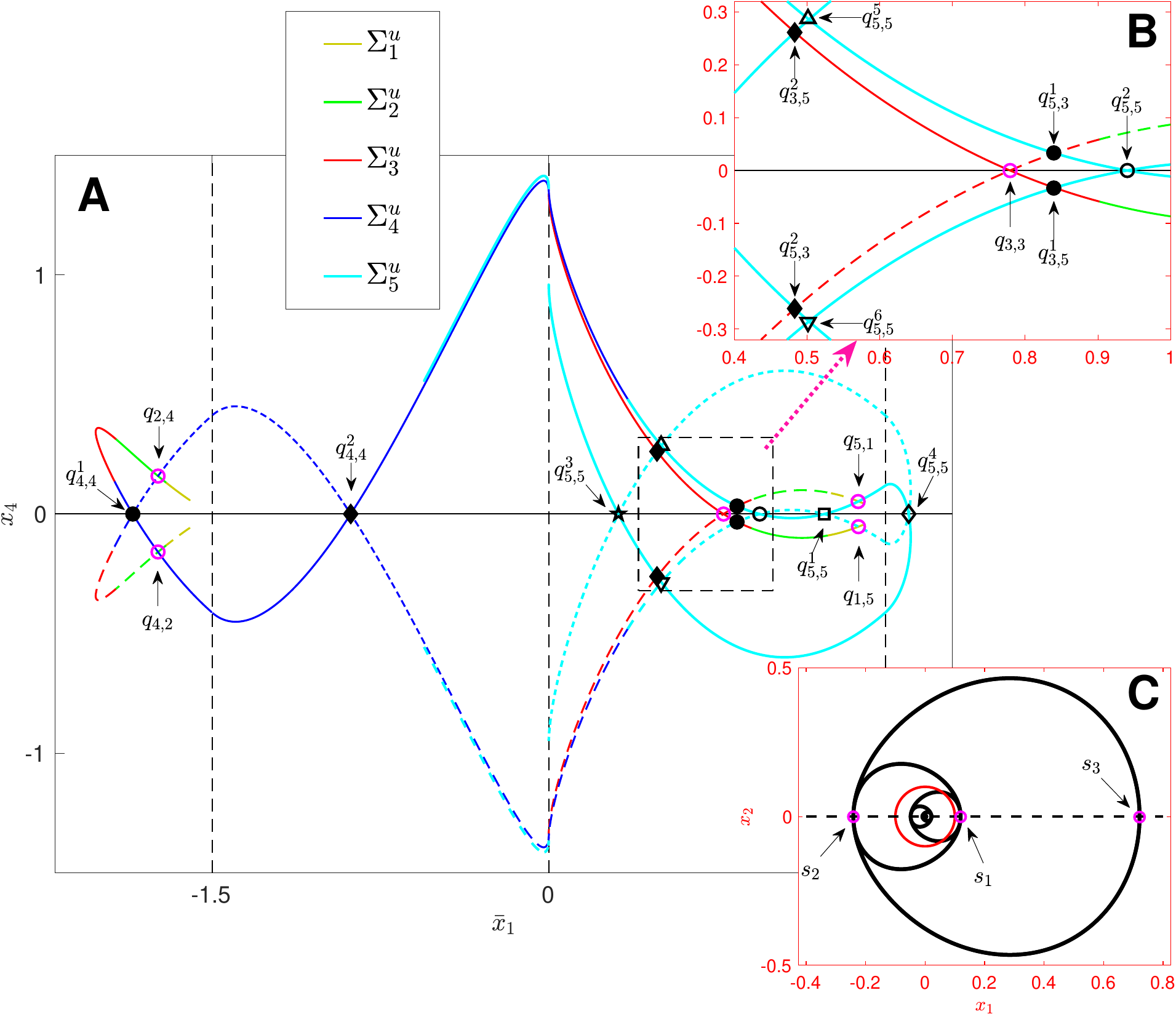}
    \caption{The first five intersections $\Sigma^{u}_k$ (solid lines) and  $\Sigma^{s}_k$ (dotted lines), for $k=1,\ldots,5$ and $\eta_3=-1.73$ on $P(C)$. We use the notation $q_{i,j}$ to indicate that a point belongs to $\Sigma^u_i \cap \Sigma^u_j$. Superindices are used when $\Sigma^u_i \cap \Sigma^u_j$ contains more than one point to differentiate between homoclinic orbits. See the text for additional explanations. Panel B is and enlargement of the rectangle show in Panel A. Panel C shows the projection of the homoclinic orbit of order $6$ (see non-filled magenta circles) on the plane $(x_1,x_2)$.}
    \label{fig:cincointersecciones}
\end{figure}
Figure \ref{fig:cincointersecciones} illustrates the homoclinic orbits detected from the calculation of $\Sigma^u_{i}$ with $i=1,\ldots,5$, when $\eta_3= -1.73$. Solid lines represent intersections $\Sigma^u_{i}$, and dashed lines correspond to symmetric $\Sigma^s_{i}$. With the information provided by these intersections, seven symmetric homoclinic orbits are observed. Panels A and B show a single symmetric homoclinic orbit (nonfilled magenta circles) of order $6$, $\Gamma_6 \equiv [q_{1,5},q_{2,4},q_{3,3},q_{4,2},q_{5,1}]$.
This is the only homoclinic orbit for which all of its intersections with $\Sigma^u$ and $\Sigma^s$ are displayed. Panel C shows the homoclinic orbit (black) projected onto the coordinate plane $(x_1,x_2)$ and, in red, the projection of the fundamental domains. Note that the first intersection after leaving the fundamental domain $D^u$ is the point $q_{1,5} \in \Sigma^u_1$ that comes from a point of $\sigma(\theta)\in D ^{u,+}$. In the panel C, $\sigma(\theta)$ corresponds to the intersection with $x_2>0$, between the projected orbit and the red circle. The projected curve then successively passes through $s_1$, $s_2$, $s_3$, $s_2$ and $s_1$ before crossing the red circle again, the instant at which the homoclinic orbit enters $D^s$. We also notice that $s_1$ is the projection of $q_{1,5}$ and $q_{5,1}$, $s_2$ the projection of $q_{2,4}$ and $q_{4,2}$, and $s_3$ the projection of $q_{3,3}$.

There are two symmetric homoclinic orbits of order $8$:
\[\Gamma^\nu_8 \equiv [q^\nu_{i,8-i}]_{i=1,\ldots,7},\]
with $\nu=1,2$.
Points $q^\nu_{3,5},q^\nu_{4,4},q^\nu_{5,3}$, with $\nu=1,2$, are shown in panels A and B (black circles for $\nu=1$ and black diamonds for $\nu=2$).

Moreover, there are four symmetric homoclinic orbits of order $10$:
\[\Gamma^\nu_{10} \equiv [q^\nu_{i,10-i}]_{i=1,\ldots,9},\]
with $\nu=1,2,3,4$.
The points $q^\nu_{5,5}$, with $\nu=1,2,3,4$, are shown in panels A and B with nonfilled black square ($\nu=1$), circle ($\nu=2$), star ($\nu=3$) and diamond ($\nu=4$).

Finally, there are two asymmetric homoclinic orbits of order $10$:
\[\Gamma^\nu_{10} \equiv [q^\nu_{i,10-i}]_{i=1,\ldots,9},\]
with $\nu=5,6$.
The points $q^\nu_{5,5}$, with $\nu=5,6$, are shown in panels A and B with a black non-filled and non-inverted triangle ($\nu=5$) and a black non-filled inverted triangle ($\nu=6$).

\begin{remark}
     Our labeling of homoclinic orbits differs from that proposed in \cite{bufchatol1996}. 
Roughly speaking, and taking into account the behavior of the graph of $x_1(t)$ along the orbit, whenever it makes an excursion outside a certain fixed neighborhood of the origin, $x_1$ achieves a high amplitude maximum. Labelling in \cite{bufchatol1996} focuses on the number of these maxima, as well as the count of intersections with $x_1=0$ and the number of local extrema of $x_1$ between each pair of consecutive large maxima. 
\end{remark}

\section{A discussion about intersections}
\label{sec:intersections}
After introducing our approach to visualize the intersections of the invariant manifolds at the origin with the crossing section ${x_2=0}$ and exploring its implications in the discussion of the existence and genesis of homoclinic bifocal orbits, this section provides a concise overview of how the initial intersections evolve with respect to the parameter.

\begin{figure}
    \centering
    \includegraphics[width=0.97\linewidth]{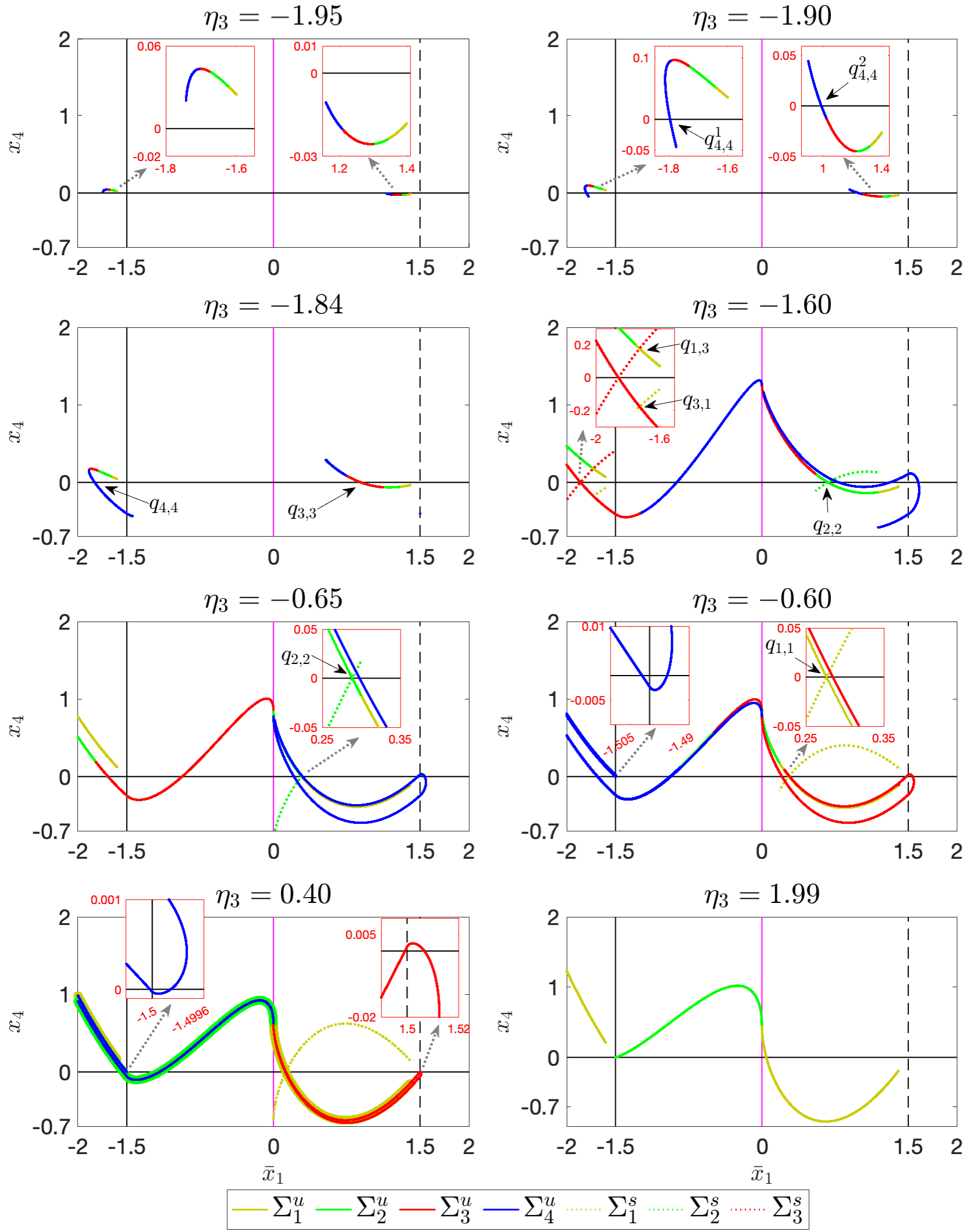}
    \caption{Intersections $\Sigma_i^u$, for $i=1,2,3,4$, are depicted for several parameter values. Two prominent behaviors emerge: first, a tendency for the initial intersections to overlap as $\eta_3$ approaches $2$; second, the reduction in the number of intersections exhibited by the primary homoclinic as it transits from $D^u$ to $D^s$. Additional details are provided in the text.}
    \label{fig:cuatro_cortes}
\end{figure}

Figure \ref{fig:cuatro_cortes} illustrates the intersections $\Sigma_i^u$, with $i=1,2,3,4$ (solid lines), as $\eta_3$ increases from $\eta_3=-1.95$, a value close to $\eta_3=-2$, where the eigenvalues at equilibrium become pure resonant imaginaries, to $\eta_3=1.99$, a value close to $\eta_3=2$, where the eigenvalues become real. In particular, one change observed with growing $\eta_3$ is that, although it is typical for successive intersections to overlap as we approach $\eta_3=2$, the first intersections themselves appear coincident when they are plotted together, as evidenced in the intersections for $\eta_3=0.40$, where we have used different thicknesses to be able to see all the curves. It is also worth mentioning that obtaining the intersections $\Sigma_i^u$ for large $i$ is numerically challenging. This difficulty arises because the length of the arcs in the fundamental domain that determine each intersection tends to $0$ as $i$ increases. For values of $\eta_3$ near $2$, these lengths escape from reasonable discretization ranges for the angle in the fundamental domain, even for small $i$. In the case of $\eta_3=1.99$, only intersections $\Sigma_1^u$ and $\Sigma_2^u$ were obtained. For $\eta_3=0.40$, we observe the first four intersections, but with overlaps. As a consequence, the tentacular geometry becomes difficult to visualize for values of $\eta_3$ close to $2$.

Another interesting aspect illustrated in Figure \ref{fig:cuatro_cortes}, is the evolution of the primary homoclinic orbit, the one with the lowest-order intersection. For $\eta_3=-1.95$, the intersection with $\mathrm{Fix} (R)$ occurs for $\Sigma_i^u$ with $i>4$ (not plotted). It seems that the number of intersections required to achieve a first cross increases monotonically and without bounds as $\eta_3$ approaches $-2$. Simultaneously, the length of the intersections curves decreases. An issue of great interest is to analyze the limit set of $\cup_{i\in\mathbb{N}} \Sigma_i^u$ as $\eta_3$ approaches $-2$, the point where the hyperbolicity of the equilibrium is lost as the eigenvalues collapse onto the imaginary axis.

For $\eta_3=-1.90$, we observe two crosses between $\Sigma_4^u$ and $\mathrm{Fix}(R)$, that is, two homoclinic orbits of order $8$ corresponding to intersection points $q_{4, 4}^\nu$, with $\nu=1,2$, marked in the figure. The first one at $q_{4, 4}^1$ still persists for $\eta_3=-1.84$, but the other one has changed to order $6$ (point $q_{3,3}$ in the figure), and this becomes a homoclinic orbit of order $4$ when $\eta_3=-1.60$. In this case, all the intersections of this homoclinic orbit with $\cup_{i\in\mathbb{N}} \Sigma_i^u$ are shown in the figure: $[q_{1,3},q_{2,2},q_{3,1}]$. For $\eta_3=-0.60$, we identify a homoclinic orbit of order $2$ (point $q_{1,1}$) that persists for all parameter values as $\eta_3$ increases and tends to $2$.

Figure \ref{fig:cuatro_cortes} also illustrates the emergence of tentacles, which are consistently present for all $\eta_3 \in ]-2,2[$ due to their inherent connection to bifocal homoclinic orbits. However, as previously discussed, obtaining the crossings $\Sigma_i^u$ becomes notably intricate as the parameter values approach $2$, even for small $i$ values. Defining a tentacle accurately is challenging, but broadly, it can be characterized as any $T \subset \Sigma_i^u$ bounded by two intersections with $\mathrm{Fix}(R)$. For $\eta_3=0.40$, two tentacles are depicted: one in $\Sigma^u_3$ (in red) and the other in $\Sigma^i_4$ (in blue). These tentacles persist for $\eta_3=-0.60$, but when $\eta_3=-0.65$ is reached, the blue tentacle on the left side vanishes and the red tentacle changes to blue.

\section{Cascades of homoclinic tangencies}
\label{sec:cocoon}

Figure \ref{fig:tentacles} shows a large number of foldings of $\Sigma^u$ and $\Sigma^s$. As we have already mentioned, following the terminology introduced in \cite{bufchatol1996}, we refer to those folds as tentacles. When we vary the parameter, the tentacles wrap and unwrap, giving rise to tangencies with $\mathrm{Fix}(R)$. In the unfolding of a homoclinic tangency with respect to the parameter $\eta_3$, two homoclinic orbits collide and disappear. Figure \ref{fig:homoclinic_tangency} illustrates one of these unfoldings for $\eta_3 \approx -1.776$. It shows three tentacles formed by $\Sigma^u_{10}$ for three different values of the parameter. For $\eta_3 \approx -1.771$ two cross points $q^1_{10,10}$ and $q^2_{10,10}$ are observed between $\Sigma^u_{10}$ and $\mathrm {Fix}(R)$, which corresponds to two homoclinic orbits of order $20$. As $\eta_3$ decreases, these points get closer and closer until they collapse to a single point for $\eta_3 \approx -1.776$ (green curve). For $\eta_3\approx -1.781$ the part of $\Sigma^u_{10}$ shown (red curve) no longer exhibits cross points.
These homoclinic tangencies correspond to the coalescence of symmetric homoclinic orbits described in \cite[Section 4.2 (Case 1) and Figure 15]{bufchatol1996}.
\begin{figure}[t]
    \centering
    \includegraphics[width=0.75\linewidth]{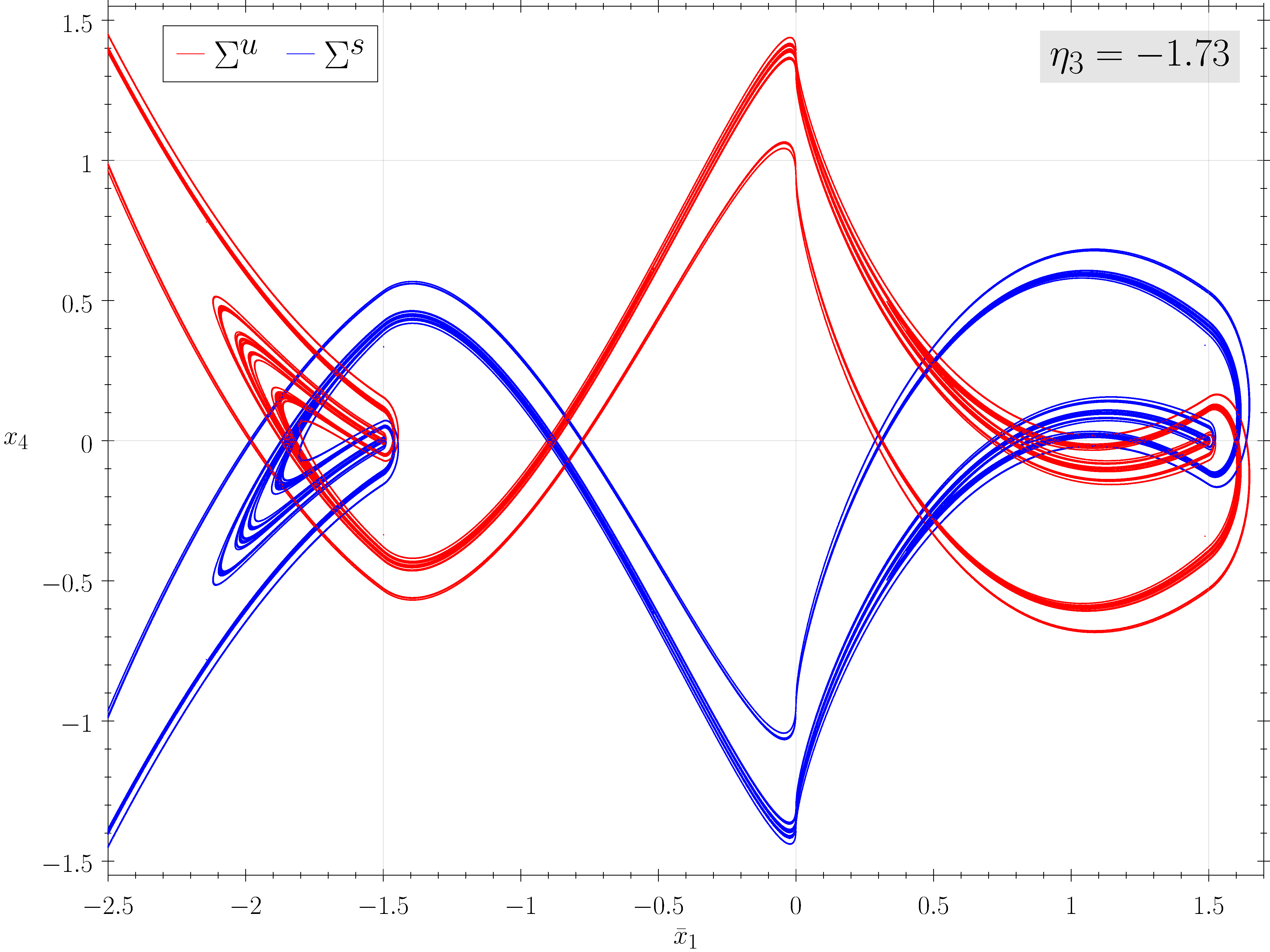}
    \caption{Sets $\Sigma^u_k$ and $\Sigma^s_k$ for $\eta_3=-1.73$ (a large number $k$ of intersections is considered). This plot provides a clear illustration of the existence of infinitely many tentacles and, as is already known theoretically, infinitely many homoclinic orbits.}
    \label{fig:tentacles}
\end{figure}

The geometry of the local transitions around the equilibrium and the numerical evidence itself suggest that, as tentacles wrap and unwrap over the loop cylinder $C$, an infinite number of homoclinic tangencies similar to the one we have just discussed will emerge. This is now the point where we link the present problem with the study of cocoon bifurcations conducted in \cite{dumibakok2006,lau1992}.
\begin{figure}
    \centering
    \includegraphics[width=0.75\linewidth]{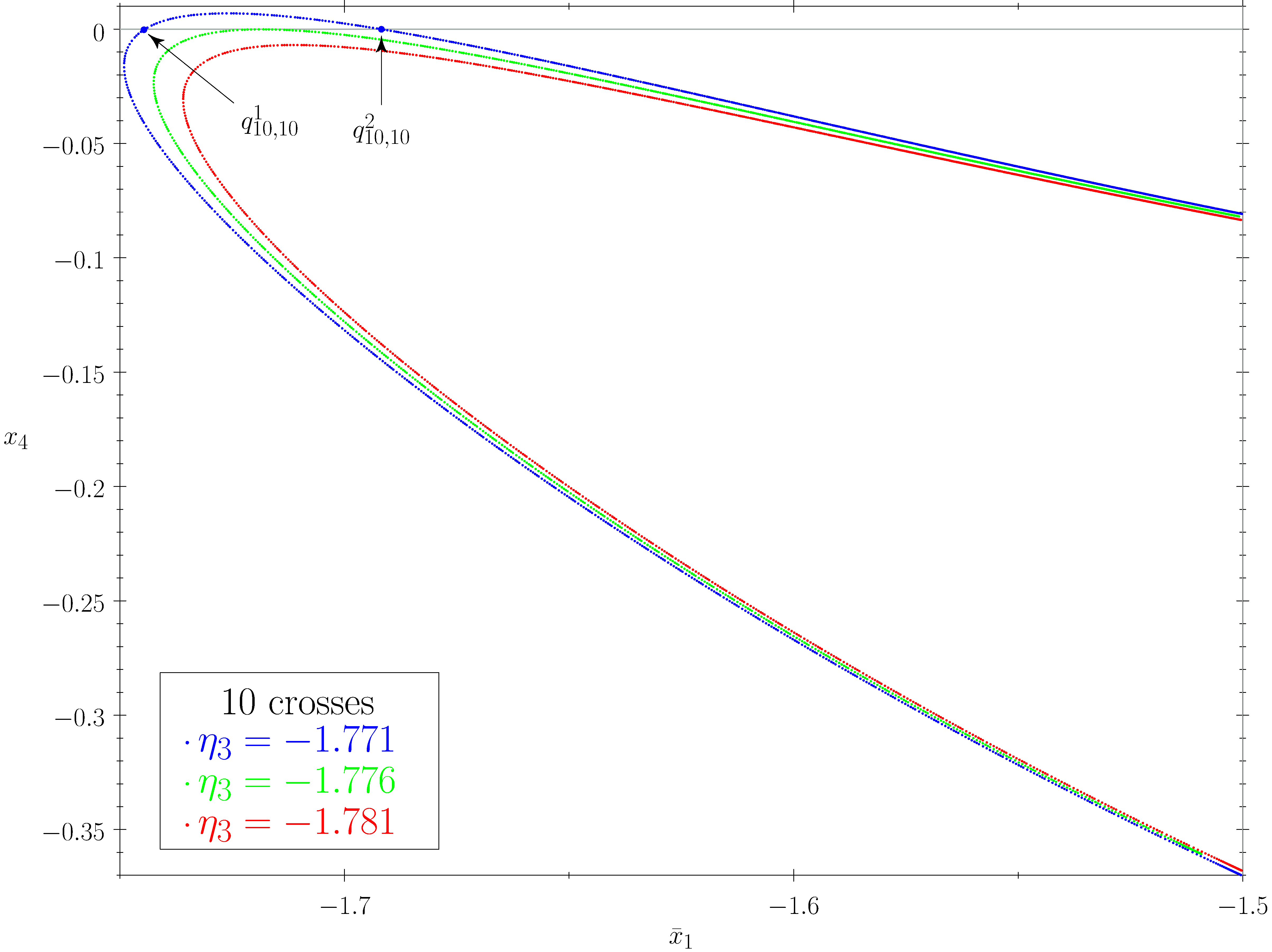}
    \caption{Pieces of $\Sigma_{10}^u$ for three different values of the parameter. When $\eta_3\approx -1.771$, $\Sigma_{10}^u$ has two intersections with $\mathrm{Fix}(R)$. These homoclinic orbits collide and disappear at $\eta_3\approx -1.776$, where there is a tangent bifurcation.}
    \label{fig:homoclinic_tangency}
\end{figure}

\subsection{Cocooning cascades of homoclinic tangencies}
The set of phenomena that give shape to what Lau called the cocoon bifurcation was explained in \cite{lau1992}. In \cite{dumibakok2006}, an organizing center was introduced for part of the phenomena related to cocoon bifurcation. The cocoon bifurcation takes place in the Michelson system \eqref{michelson_system}, which is a three-dimensional and reversible system. However, the reversibility of the four-dimensional system \eqref{sistema_principal}, together with the fact that we work on the three-dimensional manifold $H^{-1}(0)$, allow us to place ourselves in a scenario parallel to that of Michelson system. Indeed, it is possible to develop the theoretical framework in a general context, following similar ideas to those in \cite{dumibakok2006}.

Let $X_\lambda$ be a one-parameter family of vector fields in $\mathbb{R}^4$, verifying the properties below:
\begin{description}
    \item[\textrm{(H1)}] $X_\lambda$ is time-reversible with respect to the linear involution $R$ with $\mathrm{dim}(\mathrm{Fix}(R))=2$, where $\mathrm{Fix}(R)$ stands for the  subspace of fixed points of $R$.
    \item[\textrm{(H2)}] There exists a first integral $H_\lambda$ for the vector field $X_\lambda$.
    \item[\textrm{(H3)}] For all $\lambda$, $X_\lambda$ has a hyperbolic equilibrium point $E\in \mathrm{Fix}(R)$ such that $\mathrm{dim} \, W^u(E)=\mathrm{dim} \, W^s(E)=2$.
\end{description}
Without loss of generality, we assume that the involution $R$ is given by the map \eqref{reversor}, and also that $H(E)=0$. Moreover, we impose the following transversality condition:
\begin{description}
    \item[\textrm{(H4)}] $(H^{-1}(0)\setminus \{E\}) \cap \mathrm{Fix}(R) \neq \varnothing$,
    and $\frac{\partial H}{\partial x_1}(q) \frac{\partial H}{\partial x_3}(q) \neq 0$ for all $q\in H^{-1}(0) \cap \mathrm{Fix}(R)$, with $q\neq E$.
\end{description}
\begin{remark}
    Condition (H4) means that, out of $E$, the level set $H^{-1}(0)$ and $\mathrm{Fix}(R)$ meet transversely.
\end{remark}
\begin{definition}
    Under the conditions (H1)-(H3), we say that the family $X_\lambda$ has a \textbf{cocooning cascade of homoclinic tangencies} centred at $\lambda_*$ if there is a closed solid 2-torus $T \subset H^{-1}(0)$ with $E \notin T$ and a monotone sequence of parameters $\lambda_n$ converging to $\lambda_*$, for which the corresponding vector field $X_{\lambda_n}$ has a tangency of $W^u(E)$ and $W^s(E)$ such that the homoclinic tangency orbit intersects with $T$ and has its lenght within $T$ tending to infinity as $n\to\infty$.
\end{definition}

\begin{definition}
    A family of vector fields $X_\lambda$ on $\mathbb{R}^4$ satisfying (H1)-(H4) is said to have a \textbf{cusp-transverse heteroclinic cycle} at $\lambda=\lambda_0$, if the conditions below hold:
    \begin{description}
        \item[\textrm{(C1)}] $X_{\lambda_0}$ has a saddle-node periodic orbit $\gamma_* \subset H^{-1}(0)$ which is symmetric under the involution $R$.
        \item[\textrm{(C2)}] The saddle-node periodic orbit $\gamma_*$ is generic and generically unfolded in $X_\lambda$ under the reversibilty with respect to $R$.
        \item[\textrm{(C3)}] $W^u(E)$ and $W^s(\gamma^*)$, as well as $W^u(\gamma^*)$ and $W^s(E)$, intersect transversely, where $W^u(\gamma^*)$ and $W^s(\gamma^*)$ stand for the stable and unstable sets of the non-hyperbolic periodic orbit $\gamma^*$.
    \end{description}
\end{definition}
\begin{remark}
    The notions of cocooning cascade of homoclinic tangencies and cusp-transverse heteroclinic cycle are adaptions of the notions of cocooning cascade of heteroclinic tangencies and cusp-transverse heteroclinic chain, respectively, introduced in \cite[Definitions 1.3 and 1.4]{dumibakok2006}. In that paper, one assumes that the stable and unstable sets of the saddle-node periodic orbit are intersected by two-dimensional invariant manifolds of two saddle-type equilibrium points with different stability indices. Instead of homoclinic, one has to deal with heteroclinic orbits.
\end{remark}
We prove the following result:
\begin{theorem}
\label{th:cocooning_cascade}
    Let $X_{\lambda}$ be a smooth family of reversible vector fields on $\mathbb{R}^4$ satisfying (H1)-(H4). Suppose that at $\lambda=\lambda_0$ the corresponding vector field $X_{\lambda_0}$ has a cusp-transverse heteroclinic cycle. Then the family $X_\lambda$ exhibits a cocooning cascade of homoclinic tangencies centered at $\lambda_0$.
\end{theorem}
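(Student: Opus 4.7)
The plan is to adapt the cocoon bifurcation argument of \cite{dumibakok2006} to the present Hamiltonian reversible setting by first reducing the dynamics to the three-dimensional invariant hypersurface $M:=H_{\lambda}^{-1}(0)$ and then exploiting reversibility to convert heteroclinic intersections into symmetric homoclinic ones. By (H2), $M$ is invariant under the flow of $X_{\lambda}$; by (H3) and (H4), $M$ is smooth away from $E$ and meets $\mathrm{Fix}(R)$ transversely along a one-dimensional curve $L$. Within $M$, the manifolds $W^u(E)$ and $W^s(E)$ are codimension one, and $\gamma_*\subset M$ is a closed curve. The geometric situation is therefore formally parallel to that considered by Lau and by Dumortier--Ib\'a\~nez--Kokubu, with $L$ playing the role of the fixed line of the reversor on the three-dimensional slice.

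First I would set up an $R$-symmetric Poincar\'e section $\Sigma\subset M$ transverse to $\gamma_*$ at a fixed point of the involution $p_*\in\gamma_*\cap\mathrm{Fix}(R)$ (which exists since $\gamma_*$ is symmetric). The return map $\Pi_\lambda:\Sigma\to\Sigma$ is then reversible with respect to the involution $R|_\Sigma$, whose fixed set is $L\cap\Sigma$, a smooth arc through $p_*$. Using (C1)--(C2), $p_*$ is a generic saddle-node fixed point of $\Pi_{\lambda_0}$ unfolded generically in $\lambda$. Standard normal-form theory adapted to the reversible case yields local coordinates $(\xi,\eta)$ on $\Sigma$ in which
\begin{equation*}
\Pi_\lambda(\xi,\eta)=\bigl(\xi+\xi^{2}+\alpha(\lambda-\lambda_{0})+O_{3},\ \mu\eta+O_{2}\bigr),\qquad |\mu|<1,
\end{equation*}
with the fixed arc $L\cap\Sigma$ tangent to $\{\eta=0\}$ at $p_*$. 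For $\lambda$ on the side where $\gamma_*$ has disappeared, orbits traversing the ghost corridor spend time of order $1/\sqrt{|\lambda-\lambda_{0}|}$ near $p_*$ and escape along the strong unstable direction.

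Next I would use (C3) to produce the transverse global heteroclinic arcs. The transverse intersection $W^{u}(E)\pitchfork W^{s}(\gamma_*)$ delivers an arc $\gamma^{u}\subset W^{u}(E)\cap\Sigma$ crossing the local strong-stable foliation of $p_*$ transversely; by reversibility, $R(\gamma^{u})\subset W^{s}(E)\cap\Sigma$ crosses the strong-unstable direction transversely. Iterating $\gamma^{u}$ under $\Pi_\lambda$ and applying the saddle-node $\lambda$-lemma (the technical core of \cite{dumibakok2006}) shows that for a sequence $\lambda\to\lambda_{0}$ on the relevant side, $\Pi_\lambda^{n}(\gamma^{u})$ becomes a long curve that sweeps across the corridor, accumulates on the strong-unstable arc $R(\gamma^{u})$ in the $C^{1}$ topology, and therefore crosses the fixed arc $L\cap\Sigma$ in arbitrarily many transverse points as $n\to\infty$. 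Each such crossing $q\in W^{u}(E)\cap\mathrm{Fix}(R)$ yields a symmetric homoclinic orbit of $X_\lambda$, because $q=R(q)\in R(W^{u}(E))=W^{s}(E)$.

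Finally, as $\lambda$ moves monotonically toward $\lambda_{0}$, the one-parameter families of transverse crossings of $\Pi_\lambda^{n}(\gamma^{u})$ with $L\cap\Sigma$ are created and annihilated in pairs through quadratic tangencies; at each such parameter $\lambda_{n}$ the two colliding symmetric homoclinic orbits yield a tangency of $W^{u}(E)$ with $W^{s}(E)$. Since the corresponding homoclinic orbit makes $n$ passes near $\gamma_*$ inside any fixed tubular neighborhood $T$ of $\gamma_*$ with $E\notin T$, its length inside $T$ tends to infinity, producing the cocooning cascade centered at $\lambda_{0}$. The main obstacle I expect is the reversible saddle-node $\lambda$-lemma: one must check that the normal-form reduction, the quantitative control of the passage map through the saddle-node corridor, and the accumulation of $\Pi_\lambda^{n}(\gamma^{u})$ on $R(\gamma^{u})$ all respect the involution $R|_\Sigma$, so that the crossings with $L\cap\Sigma$ are genuinely pairwise and generate quadratic tangencies rather than higher-codimension degeneracies. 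This is where the bulk of the technical work, inherited from \cite{dumibakok2006}, has to be carefully transcribed to the reversible Hamiltonian setting.
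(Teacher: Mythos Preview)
Your overall architecture---reduce to the three-dimensional level set $H^{-1}(0)$, build an $R$-invariant two-dimensional Poincar\'e section through a symmetric point $p_*\in\gamma_*\cap\mathrm{Fix}(R)$, verify that the return map is reversible, and then invoke the machinery of \cite{dumibakok2006}---matches the paper's proof. But your local normal form is wrong, and this is not a cosmetic issue: it misrepresents the geometry on which the whole argument rests.

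You write $\Pi_\lambda(\xi,\eta)=(\xi+\xi^2+\alpha(\lambda-\lambda_0)+O_3,\ \mu\eta+O_2)$ with $|\mu|<1$, i.e.\ a saddle-node with a normally hyperbolic transverse direction. In a two-dimensional reversible map, however, the spectrum of $D\Pi_{\lambda_0}(p_*)$ at a symmetric fixed point is invariant under $\mu\mapsto 1/\mu$; once one eigenvalue equals $1$, the other is forced to be $1$ as well. The paper makes this explicit: $D\Pi^*_{\lambda_0}(p_0)$ has a \emph{double} eigenvalue $1$ and, under the genericity condition (C2), is conjugate to the unipotent block $\begin{pmatrix}1&1\\0&1\end{pmatrix}$. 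There is therefore no strong-stable foliation, no strong-unstable direction, and no contraction rate $\mu$ to speak of. The stable and unstable sets of $p_*$ are not transverse smooth curves but two branches $\gamma_s$, $\gamma_u$ that meet in a \emph{cusp} at $p_*$---this is precisely the origin of the name ``cusp-transverse heteroclinic cycle''. Condition (C3) then says that $L^u=W^u(E)\cap C^*$ crosses $\gamma_s$ transversely and $L^s=W^s(E)\cap C^*$ crosses $\gamma_u$ transversely.

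Once you replace your hyperbolic normal form by the unipotent one, the rest of your outline can be salvaged, but the analytic content changes: the accumulation of $(\Pi^*)^n(L^u)$ is governed not by a standard $\lambda$-lemma along a hyperbolic direction but by the parabolic dynamics of the unipotent saddle-node, and this is exactly what \cite[Theorems~2.4--2.6]{dumibakok2006} supply. The paper simply checks that the reduced Poincar\'e map is reversible under $\widehat R:(x_1,x_4)\mapsto(x_1,-x_4)$ and then quotes those theorems directly to obtain the transverse intersections of $(\Pi^*)^n(L^u)$ with $(\Pi^*)^{-n}(L^s)$ and the monotone sequence $\lambda_n\to\lambda_0$ of tangencies. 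Your reformulation via tangencies of $\Pi^n(\gamma^u)$ with $\mathrm{Fix}(\widehat R)$ is equivalent by reversibility, but it too requires the correct local model to go through.
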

\begin{proof}
A symmetric periodic orbit $\gamma$ intersects $\mathrm{Fix}(R)$ at exactly two points (see \cite{vanfie1992,har1998,lamrob1998}).
Let $p_0$ be one of the two points where $\gamma_*$ meets $\mathrm{Fix}(R)$. Due to condition (H4), in a neighborhood of $p_0$, the equation $H=0$ defines either $x_1$ or $x_3$ as a function of the other variables. Without loss of generality, we assume that $\frac{\partial H}{\partial x_3}(p_0)\neq 0$. Therefore, we can take a coordinate chart on $H^{-1}(0)$ with variables $(x_1,x_2,x_4)$ in a $R$-invariant neighborhood $V$ of $p_0$. That is, locally around $p_0$ we write
\[
H^{-1}(0)_{loc}=\{(x_1,x_2,x_3,x_4) \in V \,:\, (x_1,x_2,x_4) \in U,\, x_3=f(x_1,x_2,x_4)\},
\]
where $U$ is a three-dimensional domain invariant under the involution
\[
(x_1,x_2,x_4) \rightarrow (x_1,-x_2,-x_4)
\]
and $f:U \rightarrow \mathbb{R}$ is such that 
\[
H(x_1,x_2,f(x_1,x_2,x_4),x_4)=0
\]
and
\begin{equation}
\label{eq_reversibility_f}
f(x_1,-x_2,-x_4)=f(x_1,x_2,x_4).    
\end{equation}

Now, let $C^* \subset H^{-1}(0)$ be a cross section (with respect to the three-dimensional flow restricted to $H^{-1}(0)$), satisfying that $R(C^*)=C^*$. This cross section must contain $\mathrm{Fix}(R) \cap H^{-1}(0)_{loc}$. Since
\[
\mathrm{Fix}(R) \cap H^{-1}(0)_{loc}=\{(x_1,0,x_3,0) \in V \,:\, (x_1,0,0) \in U,\, x_3=f(x_1,0,0)\},
\]
we have to take a coordinate chart on $C^*$ containing variable $x_1$. Without loss of generality, we assume that a coordinate chart can be given with variables $(x_1,x_4)$; otherwise, we should take coordinates $(x_1,x_2)$, but the subsequent arguments would be analogous. For this reason, we write $C^*$ as 
\[
C^*=\{(x_1,x_2,x_3,x_4) \in V \,:\, (x_1,x_4) \in W,\, x_2=g(x_1,x_4),\,x_3=f(x_1,g(x_1,x_4),x_4)\},
\]
where $W$ is a two-dimensional domain invariant under the involution
\[
(x_1,x_4) \rightarrow (x_1,-x_4)
\]
and $g:W \rightarrow \mathbb{R}$ is such that 
\[
(x_1,g(x_1,x_4),x_4) \in U
\]
and
\begin{equation}
\label{eq_reversibility_g}
g(x_1,-x_4)=-g(x_1,x_4).
\end{equation}

Let $C_0^* \subset C^*$ be a subsection, also invariant with respect to $R$, such that the Poincare map
\[
\Pi^*: C_0^* \subset C^* \rightarrow C^*
\]
is well defined.
\begin{remark}
    In general, a periodic orbit will intersect $C$ at two or more different points. Suppose that it crosses $C$ at $m$ points. Therefore, $\Pi^*$ is given by the composition of a convenient restriction of the general Poincaré map $\Pi: C \rightarrow C$ with itself $m$ times.
\end{remark}

Given the involution
\[
\widehat{R}: (x_1,x_4) \rightarrow (x_1,-x_4),
\]
it easily follows that
\[
\widehat{R} \circ \Pi^*= (\Pi^*)^{-1} \circ \widehat{R}.
\]
Indeed, let $(x_1,g(x_1,x_4),f(x_1,g(x_1,x_4),x_4),x_4) \in C^*$ be any point where the Poincaré map is well-defined, that is, there exists $\bar t >0$ such that 
\[
(\bar x_1,\bar x_2,\bar x_3,\bar x_4)=\varphi(\bar t,(x_1,g(x_1,x_4),f(x_1,g(x_1,x_4),x_4),x_4)) \in C^*
\]
and $\varphi(t,(x_1,g(x_1,x_4),f(x_1,g(x_1,x_4),x_4),x_4)) \notin C^*$ for all $t\in ]0,\bar t[$. Then, $\Pi\left((x_1,x_4)\right)=(\bar x_1,\bar x_4)$. Due to the reversibility of \eqref{sistema_principal},
\begin{eqnarray*}
\lefteqn{\varphi(-\bar t,R((x_1,g(x_1,x_4),f(x_1,g(x_1,x_4),x_4),x_4)))}
\\
&=&
R(\varphi(\bar t,(x_1,g(x_1,x_4),f(x_1,g(x_1,x_4),x_4),x_4))),
\end{eqnarray*}
that is,
$$
\varphi(-\bar t,(x_1,-g(x_1,x_4),f(x_1,g(x_1,x_4),x_4),-x_4))=(\bar x_1,-\bar x_2,\bar x_3,-\bar x_4)\in C^*,
$$
and there is no $t \in ]-\bar t,0[$ such that $\varphi(-\bar t,(x_1,-g(x_1,x_4),f(x_1,g(x_1,x_4),x_4),-x_4))\in C^*$. Taking into account \eqref{eq_reversibility_f} and \eqref{eq_reversibility_g}, 
\[
(x_1,-g(x_1,x_4),f(x_1,g(x_1,x_4),x_4),-x_4)=
(x_1,g(x_1,-x_4),f(x_1,g(x_1,-x_4),-x_4),-x_4)
\]
Therefore, it follows that $(\Pi^*)^{-1}((x_1,-x_4))=(\bar x_1,-\bar x_4)$.

From here, the proof is analogous to the one of Theorem 1.5 in \cite{dumibakok2006}. Let $\Pi^*_\lambda:C_0^* \subset C^* \rightarrow C^*$ be the Poincaré map along $\gamma^*$ parameterized with respect to $\lambda$ and expressed as a function of variables $(x_1,x_4)$. We proved above that the the Poincaré map is reversible under the involution $\widehat R$. As argued in \cite[Remark 1.7]{dumibakok2006}, at the saddle-node point $p_0$, the differential $D\Pi^*_\lambda(p_0)$ must have a double eigenvalue $1$. Furthermore, in \cite{dumibakok2006} it is proven that, under the genericity condition (C2), $D\Pi^*_\lambda(p_0)$ is conjugate to the unipotent matrix
\[
\left(
\begin{array}{cc}
    1 & 1
    \\
    0 & 1
\end{array}
\right),
\]
and also there exists a stable branch $\gamma_s$ and an unstable branch $\gamma_u$ emanating from $p_0$ \cite[Theorem 2.4]{dumibakok2006}. Condition (C3) implies that $L^u=W^u(E) \cap C^*$ and $L^s=W^s(E) \cap C^*$ intersect transversely with the stable set $\gamma_s $ and the unstable set $\gamma_u$, as illustrated in Figure \ref{fig:saddle_node} (left). From the configuration depicted in that figure, the use of the terminology of cusp transverse heteroclinic cycle makes sense.
From \cite[Theorem 2.5]{dumibakok2006}, it follows that for $n$ large enough, the iterates $(\Pi^*)^n(L^u)$ and $(\Pi^*)^{-n}(L^s)$ intersect each other, as represented in Figure \ref{fig:saddle_node} (left).
\begin{figure}
\centering
\begin{tabular}{cc}
    \includegraphics[width=0.47\linewidth]{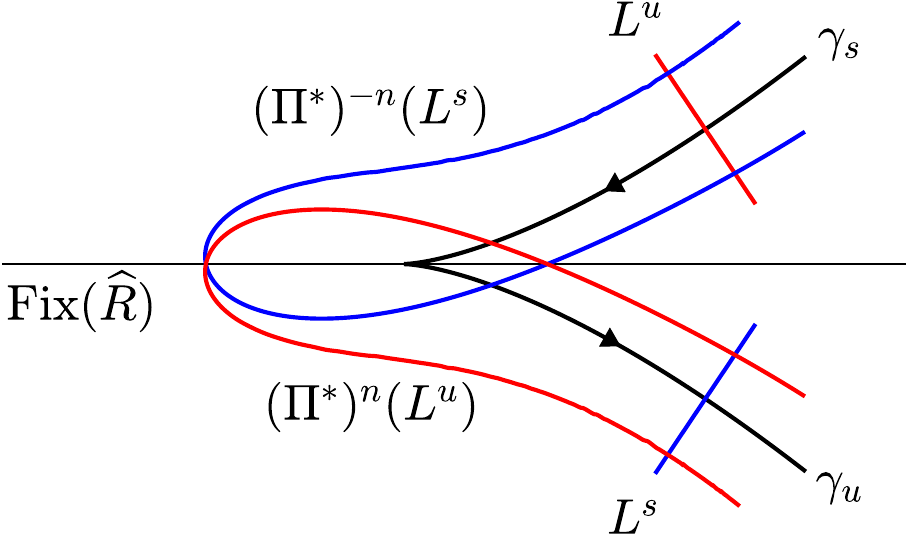}
    & 
    \includegraphics[width=0.43\linewidth]{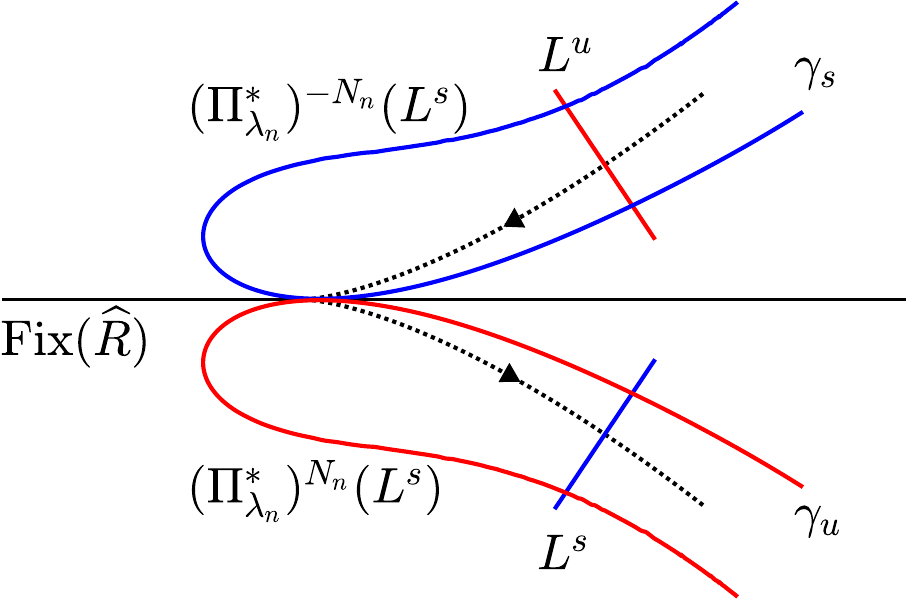}
\end{tabular}
    \caption{Left: A cusp transverse heteroclinic cycle is illustrated. The curves $\gamma_s$ and $\gamma_u$ correspond to the intersections of the stable and unstable sets of the saddle-node periodic orbit $\gamma^*$. The contact point between $\gamma_s$ and $\gamma_u$ corresponds to the fixed point given by the intersection between $\gamma^*$ and the Poincaré section $C^*$ (see the text for details). $L^u$ and $L^s$ are, respectively, intersections between the unstable and stable manifolds at the equilibrium point with $C^*$. As shown in the picture, $L^u$ and $L^s$ intersect transversely $\gamma_s$ and $\gamma_u$, respectively. As explained in the text, after an enough number of iterations of the Poincaré map and its inverse, the invariant manifolds of the equilibrium points must intersect each other transversely, as shown in the figure. Right: A homoclinic tangency appears when the fixed points disappear through the saddle-node bifurcation.}
    \label{fig:saddle_node}
\end{figure}

Now, we apply \cite[Theorem 2.6]{dumibakok2006} to conclude that there exist a sequence of parameters $\{\lambda_n\}$ that converge to $\lambda_0$, which can be chosen monotone, and a sequence of integers $\{N_n\}$ that diverge to $\infty$ such that $(\Pi_{\lambda_n}^*)^{N_n}(L^u)$ and $(\Pi_{\lambda_n}^*)^{-N_n}(L^s)$ have a point of tangency (right plot in Figure \ref{fig:saddle_node}). This means that for each $\lambda_n$, the corresponding vector field $X_{\lambda_n}$ has a homoclinic orbit $\Gamma_n$ along which $W^u(E)$ and $W^s(E)$ intersect tangentially. Moreover, since $N_n \to \infty$ as $n \to \infty$, the length of $\Gamma_n$ diverges inside any a priori given tubular neighborhood of the saddle-node periodic orbit $\gamma^*$. This completes the proof of Theorem \ref{th:cocooning_cascade}. 
\end{proof}

\subsection{Reversible saddle-node periodic bifurcations in the model.}
The study of the images of $\mathrm{Fix}(R)$ by successive iterations of the Poincaré map $\Pi$, provides numerical evidence of the existence of an infinite number of parameter values for which the system exhibits saddle-node bifurcations of periodic orbits. To illustrate this fact, we fix $\eta_3=-1.776$ and consider the initial conditions in $\mathrm{Fix}(R) \cap C_3$.  Iterations $\Pi^n(\mathrm{Fix}(R) \cap C_3)$, with $n=1,\ldots,11$ are given in the top panel in Figure \ref{fig:iteradas_del_fix}.
\begin{figure}
    \centering
    \includegraphics[width=0.84\linewidth]{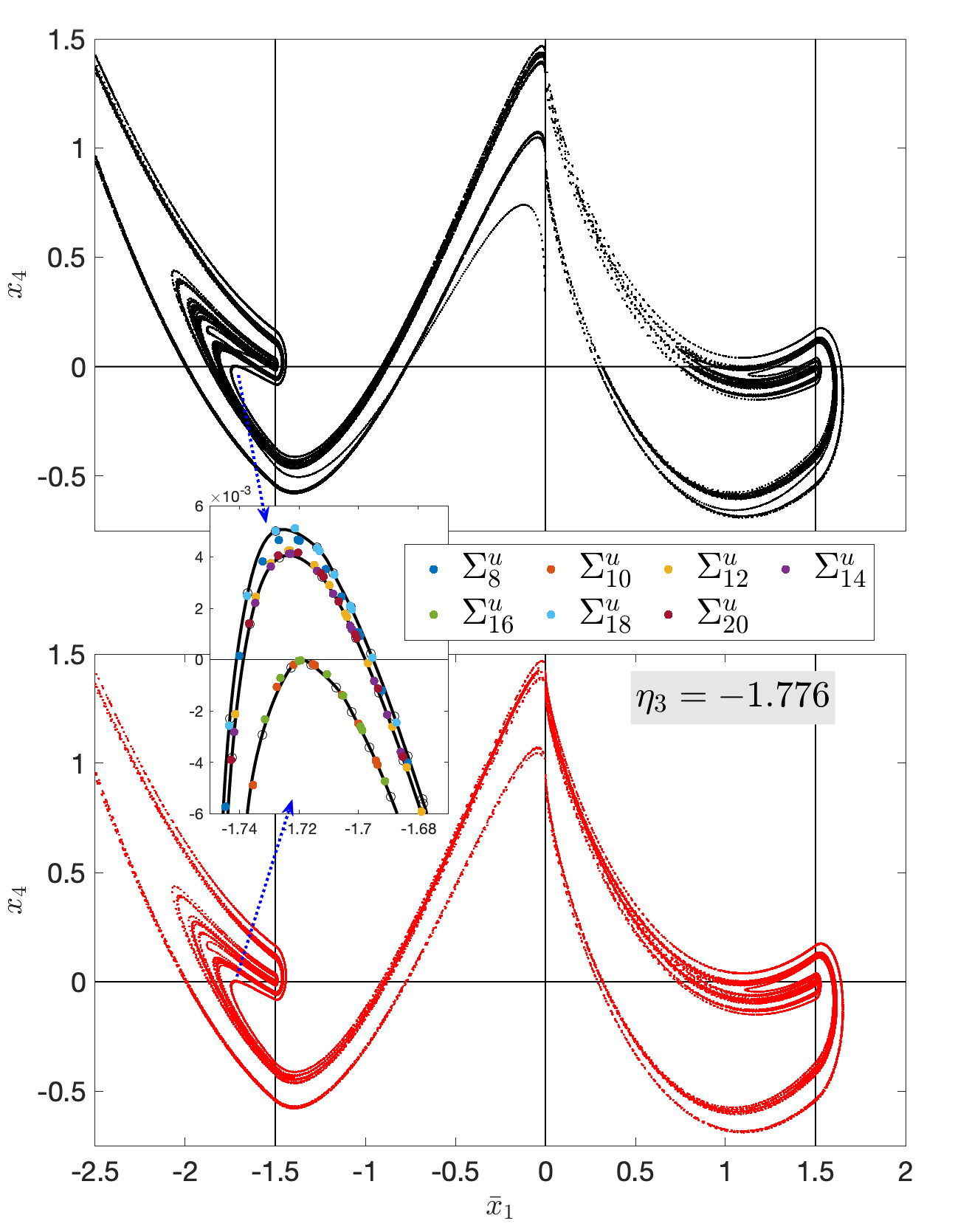}
    \caption{Top: Eleven iterations of the Poincaré map $\Pi: C \rightarrow C$ are shown, with initial points on $\mathrm{Fix} (R) \cap C_3$ ($0 < x_1 < 1.5$ and $x_3<0$). Bottom: Intersections $\Sigma_i^u$ are shown for $i=1,\ldots,20$. If these intersections were represented in the upper panel, an overlap with the iterations of $\mathrm{Fix}(R) \cap C_3$ would be clearly observed. In the central part of the figure, an enlargement of a small region in $C_1$ ($\bar x_1 \in ]-1.75,-1.67[$ and $x_4 \in ]-0.006,0.006[$) is shown. The blue arrows indicate approximately the area that is enlarged. Black curves correspond to pieces of $\Pi^{11}(\mathrm{Fix} (R) \cap C_3)$. They are obtained by computing splines through empty black circles (the numerically computed iterations). Colored points correspond to part of the intersections $\Sigma_i^u$ for $i=8,\ldots,20$ inside the enlargement.}
    \label{fig:iteradas_del_fix}
\end{figure}
To facilitate comparison, intersections $\Sigma^u_i$, where $i=1,\ldots,20$, are depicted in the bottom panel. Comparing top and bottom panels allows one to observe the overlapping nature of the curves. This observation is not surprising. For this parameter value, the primary homoclinic orbit is determined by the intersection between $\Sigma_3^u$ and $\Sigma_3^s$ at a point $q_{3,3} \in \mathrm{Fix} (R) \cap C_3$ (compare with the plot corresponding to $\eta_3=-1.84$ in Figure \ref{fig:cuatro_cortes}). Orbits of points in $\mathrm{Fix} (R) \cap C_3$ sufficiently close to $q_{3,3}$ will follow the homoclinic orbit and, after two iterations, will go through a local transition around the bifocus equilibrium point. After this passage, they depart from a neighborhood of the origin through points near $D^u$, the proximity increasing as the initial point approaches $q_{3,3}$. Consequently, subsequent iterations closely follow $\Sigma^u$.

It seems evident that, through the variation of the parameters, infinitely many points of tangency should emerge between the iterations of $\Pi^k(\mathrm{Fix}(\widehat R) \cap C_3)$ and $\mathrm{Fix}(\widehat R)$ itself. Each of these tangencies corresponds to a symmetric periodic orbit of saddle-node type. Figure \ref{fig:iteradas_del_fix} features an enlargement of a small square $[-1.75,-1.67] \times [-0.006,0.006]$ in the plane $(\bar x_1,x_4)$. The black curves represent three selected arcs contained in $\Pi^{11}(\mathrm{Fix}(\widehat R) \cap C_3)$ (constructed using numerical data interpolated by splines). The lowest curve closely approaches a tangency (in fact, a small variation of $\eta_3$ would result in such a tangency). The other two curves exhibit two intersections with $\mathrm{Fix}(\widehat R)$, presenting two periodic orbits with $22$ intersections with $C$. Colored points belong to $\Sigma^u$ and further illustrate the proximity of the iterations of $\mathrm{Fix}(\widehat R)$ to $\Sigma^u$.

Our numerical results provide the existence of saddle-node bifurcations of periodic orbits and also homoclinic tangency bifurcations. However, in order to obtain numerical evidence for the existence of cusp transverse heteroclinic cycles, a more accurate exploration is required (similar to that in \cite{dumibakok2006} or, with much more work, a computer-assisted argumentation like that in \cite{kokwilzgl2007}). Although this is a task for further research, a preliminary analysis of the linearization of the Poincaré map, around saddle-type hyperbolic orbits close to a saddle-node bifurcation, provides additional support for the conjecture.

\section{Discussion}
\label{sec:discussion}
Certainly, going beyond the results achieved and discussed in \cite{bufchatol1996} is challenging. The authors themselves highlight the intricacies involved in obtaining further advances. However, our paper introduces a novel approach to investigate bifocal homoclinic connections in \eqref{sistema_principal}. The focus lies in studying how the invariant manifolds of the bifocus intersect a specific cross section. Originally conceived as a three-dimensional section, it is effectively reduced to a two-dimensional surface considering only orbits within the $0$-level set of the first integral $H$. The resulting cross section becomes a loop cylinder, which can be represented on a plane after proper unfolding. Through these concepts, we achieve a visual understanding of the intricate geometry inherent in the invariant manifolds.
We believe that the numerical approximation of invariant manifolds and the exploration of their intersections with an appropriate cross section, as proposed in this study, have the potential to illuminate the structure of the set of homoclinic orbits. It should be noted that a similar perspective is outlined in \cite[Section 4.4]{bufchatol1996}, albeit without delving into details, as the focus was mainly on numerical continuation methods.

We have explained the parameterization method applied for numerically approximating invariant manifolds with high precision. Our approach also accounts for the unique perspective we employ in examining intersections with a cross section. It produces informative visualizations that offer insights into the creation of homoclinic orbits. With these elements, we provide a preliminary discussion on the transformations within the homoclinic structure as the parameter changes.

A logical extension of our findings would involve delving into the conjectures presented in \cite{bufchatol1996}. In particular, \cite[Section 5]{bufchatol1996} introduces a set of rules for the coalescence of symmetric homoclinic orbits. They should be explored using our methods. Establishing a connection between our proposal for homoclinic orbit labeling and that in \cite{bufchatol1996} becomes crucial for this study. Perhaps a refinement of our labeling method may be necessary.

Furthermore, our methods have allowed us to place, within the framework of the system \eqref{sistema_principal}, the concepts of cocooning cascades of homoclinic tangencies and cusp-transverse heteroclinic cycles, drawing on the results of \cite{dumibakok2006}. This leads to conjecture the existence of these objects in the model. To support this conjecture, evidences are provided for the presence of saddle-node bifurcations of periodic orbits.

Looking ahead, a more detailed exploration of these bifurcations using continuation methods and actively seeking the simplest bifurcations with the minimum number of intersections with the cross section, holds substantial interest. For such bifurcations, a computer-assisted proof, similar to the approach in \cite{kokwilzgl2007}, could be a valuable option to rigorously establish the existence of cusp-transverse heteroclinic cycles.

Explorations in Section \ref{sec:intersections} suggest an intriguing problem for study: finding the limit of $\cup_{i=1}^{\infty} \Sigma_i ^u$ as $\eta_3$ tends to $-2$. The natural approach is to examine the Hamiltonian Hopf bifurcation that occurs at $\eta_3=-2$ (as discussed, for instance, in \cite{gaigel2011,iooper1993}). In particular, resonance is absent for values of $\eta_3 < -2$. In this scenario, the bifurcation at the origin takes the form of a Hopf-Hopf bifurcation of codimension $2$, and its topological type will need to be determined following the classical classifications presented in \cite{guchol2013} or \cite{kuznetsov2023}.

Of course, our main motivation lies in the study of the entire limit family \eqref{eq:limit_family}. A crucial initial step involves the exploration of two-parametric bifurcation diagrams, keeping $\eta_3$ constant while varying $\eta_2$ and $\eta_4$ within a neighborhood of $0$. Although $]-2,2[$ is a very interesting range, the region $\eta_3 \leq -2$ is equally significant, particularly given the insights gained from the study of Hopf-Hopf singularities.

\section*{Acknowledgements}
Authors has been supported by the Spanish Research project  PID2020-113052GB-I00.

\bibliographystyle{plainnat}

\end{document}